\documentclass{article}
\usepackage{amsmath,amsthm,amssymb,amscd}

\newcommand{\ga}{\alpha}
\newcommand{\gb}{\beta}

\newcommand{\gw}{\omega}

\newcommand{\gS}{\Sigma}
\newcommand{\gs}{\sigma}

\newcommand{\forkindep}[1][]{%
  \mathrel{
    \mathop{
      \vcenter{
        \hbox{\oalign{\noalign{\kern-.3ex}\hfil$\vert$\hfil\cr
              \noalign{\kern-.7ex}
              $\smile$\cr\noalign{\kern-.3ex}}}
      }
    }\displaylimits_{#1}
  }
}

\newcommand{\liff}{\leftrightarrow}

\newcommand{\cantor}{2^\gw}

\newcommand{\bintree}{2^{<\gw}}
\newcommand{\gwtree}{\gw^{<\gw}}

\newcommand{\dom}{\mathrm{dom}}

\newcommand{\power}{\mathcal{P}}

\newcommand{\hvd}{\mathrm{HOD}}
\newcommand{\vd}{\mathrm{OD}}

\newcommand{\coll}{\mathrm{Coll}}

\newtheorem{theorem}{Theorem}[section]

\newtheorem{claim}[theorem]{Claim}

\newtheorem{fact}[theorem]{Fact}
\newtheorem{proposition}[theorem]{Proposition}

\theoremstyle{definition}
\newtheorem{definition}[theorem]{Definition}
\newtheorem{example}[theorem]{Example}
\newtheorem{question}[theorem]{Question}

\newtheorem{conjecture}[theorem]{Conjecture}

\title{More on the Boolean Prime Ideal Theorem\footnote{2020 AMS subject classification 03E25, 22F05. Keywords: Boolean Prime Ideal Theorem, Axiom of Dependent Choices, Solovay model}}

\author{
	Jacob Kowalczyk\\
	University of Florida\\
	kowalczykj@ufl.edu
	\and
	Jind{\v r}ich Zapletal\\
	University of Florida\\
	zapletal@ufl.edu}

\begin{document}
	\maketitle
	
	\begin{abstract}
		We prove the consistency of  Zermelo--Fraenkel set theory with the Axiom of Dependent Choices, no Vitali sets and a large fragment of the Boolean Prime Ideal Theorem.
	\end{abstract}
	
	\section{Introduction}

The Boolean Prime Ideal Theorem (BPI) is one of the most prominent consequences of the Axiom of Choice (AC). It asserts that every Boolean algebra carries an ultrafilter, or alternately, that every consistent propositional theory has a consistent completion. It is well-known that in the choiceless set theory ZF, BPI does not imply AC \cite{halpern:lauchli}. However, in the presence of the Axiom of Dependent Choices (DC), the picture changes, and one of the oldest open questions in choiceless set theory \cite{pincus:original} asks whether in the base theory ZF+DC, BPI is equivalent to AC. In view of the accumulated experience with these statements, one can form a natural conjecture.

\begin{conjecture}
\label{mainconjecture}
It is consistent with ZF+DC that the Boolean Prime Ideal Theorem holds and there is no Vitali set in the real line.
\end{conjecture}

\noindent Here, a Vitali set is a subset of the real line which for every real contains exactly one of its rational translates. In this paper, we show that in ZF+DC, a large fragment of BPI is consistent with nonexistence of Vitali sets. 

\begin{theorem}
\label{maintheorem}
Suppose that there is an inaccessible cardinal. Then there is an inner model of a generic extension satisfying the following:

\begin{enumerate}
\item ZF+DC;
\item there are no Vitali sets;
 \item every consistent $K_\gs$ propositional theory on Polish, locally compact set of atomic formulas has a consistent completion.
\end{enumerate}
\end{theorem}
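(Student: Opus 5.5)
The plan is to work in the style of geometric set theory (Larson--Zapletal). Let $\kappa$ be inaccessible and let $W$ be the Solovay model, i.e. the derived model of $\coll(\gw,<\kappa)$. Rather than $W$ itself (which has no nonprincipal ultrafilters, so clause (3) fails there), we take a generic extension $W[G]$, where $G$ is generic for a $\gs$-closed poset $P$ in $W$. The forcing $P$ adds a coherent system of completions: its conditions are countable (or $K_\gs$-coded) partial completions of consistent $K_\gs$ theories on Polish locally compact atom sets. The final model is then the inner model $W[G]$ itself.

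Clause (1) is routine. $\gs$-closure of $P$ in $W$, together with DC in $W$, gives DC in $W[G]$: given a relation without terminal nodes, build a descending sequence of conditions deciding successive choices.

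Clause (3) is a density argument. Given a consistent $K_\gs$ theory $T$ in $W[G]$ on a Polish locally compact atom set $X$, $T$ has a name in $W$ and hence lies in an intermediate model. We need the generic to provide a completion of $T$. We arrange $P$ so that a condition can always be extended to include a promise, i.e. a compact approximation, to complete $T$. The key combinatorial fact is a compactness argument. Write $T=\bigcup_n T_n$ with $T_n$ compact sets of formulas and exhaust $X$ by compact pieces $K_m$. Completions over each finite stage exist by finite propositional compactness, and a completion respecting all stages is coded as a closed set in a compact space. Hence a completion can be assembled generically, and the generic filter meets the corresponding dense set.

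Clause (2) is the main obstacle: showing that $P$ adds no Vitali set. We would use the framework of balanced forcing. We show $P$ is Suslin-ish and \emph{balanced}: for every condition there is a balanced virtual condition $\bar p$ in some $V[H]$ such that any two mutually generic extensions $V[H_0]$ and $V[H_1]$ yield compatible conditions. By the general theorem of Larson--Zapletal, balanced extensions of the Solovay model satisfy the relevant dichotomies. The standard consequence is that the $E_0$-quotient, equivalently the Vitali quotient, admits no transversal. This holds because a Vitali set would yield an injection of $\mathbb{R}/\mathbb{Q}$ into $\mathbb{R}$, and balanced forcings preserve the fact that $E_0$ is not smooth.

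The hard step is producing the balanced virtual conditions. For this we would take a ``generic total completion'' of all relevant theories in a $\coll(\gw,2^{\aleph_0})$ extension, and verify compatibility across mutually generic extensions. That verification needs a local-compactness argument: two completions from mutually generic models must agree on the shared ground-model atoms. We would secure this by fixing the values on ground-model compact pieces via a single ultrafilter-like choice made in the ground model. This is exactly where the $K_\gs$ and local compactness hypotheses are needed, since they make each theory determined by ground-model-coded compact data.
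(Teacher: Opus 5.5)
Your treatment of clause (2) has a genuine gap: it relies on the claim that balanced extensions of the Solovay model have no Vitali sets, and that claim is false for balanced forcing in general. Let $P$ be the $\gs$-closed poset of countable sets of reals with pairwise irrational differences. Given $x$ and $p\in P\cap\vd_x$, let $q\supseteq p$ choose, via the canonical well-order, one point in each Vitali class of a real in $\vd_x$; this $q$ is countable and lies in $\vd_x$. Suppose $y_0\forkindep_x y_1$ and $r_0\le q$, $r_1\le q$ lie in $\vd_{xy_0}$ and $\vd_{xy_1}$. If $z_0\in r_0$ and $z_1\in r_1$ satisfy $z_0-z_1\in\mathbb{Q}$, then both lie in $\vd_{xy_0}\cap\vd_{xy_1}=\vd_x$, so both equal the point of $q$ in their class. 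Hence $r_0\cup r_1$ is a common lower bound, and $P$ is a balanced poset that adds a Vitali set. Non-smoothness of $E_0$ does not rescue the argument. It concerns Borel reductions and is absolute anyway, whereas a Vitali set gives a non-definable injection of $\mathbb{R}/\mathbb{Q}$ into $\mathbb{R}$, and nothing forbids that once the Baire property fails.

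What is missing is a preservation theorem that exploits the special shape of Boolean balanced virtual conditions: complete theories $q$ in the language $\mathcal{L}_T\cap\vd_x$. The paper proceeds as follows.
\begin{enumerate}
\item Hales--Jewett gives finite sets $a_m$ of ever smaller reals on which the rational-distance graph has chromatic number at least $m$.
\item Take $y\in\prod_m a_m$ generic for a poset $Q$ that adds no independent reals, together with a $\power(\gw)/\mathrm{fin}$-generic ultrafilter $U$. Because $Q$ adds no independent reals, $U$ measures all of $\vd_{xyU}$.
\item Form $p(y,U)$ as the $U$-limit of canonical complete theories defined from tails of $y$. This is again a complete consistent theory, hence a balanced virtual condition.
\item $p(y,U)$ is unchanged when a single coordinate of $y$ changes. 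Moving coordinate $m$ along an edge inside $a_m$ moves $\Sigma y$ along an edge of the graph, while one condition forces the same color on all of its points.
\end{enumerate}

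Second, nothing in your sketch makes the poset of partial completions of \emph{all} consistent $K_\gs$ theories balanced. Agreement of two completions on ground-model atoms does not make $T\cup r_0\cup r_1$ consistent. What makes the sets $O_q$ balanced in Theorem~\ref{balancetheorem} is interpolation: whenever $T\vdash\phi_0\to\phi_1$ with $\phi_i\in\vd_{xy_i}$, there must be an interpolant in $\vd_x$. This fails for some $K_\gs$ theories, for instance the ultrafilter theory of the $F_\gs$ ideal generated by selectors (Section~\ref{imbalancedsection}). The paper therefore forces only with the disjoint union of $K_\gs$ theories satisfying the compact interpolation criterion (*) of Theorem~\ref{ksigmacriterion}, which yields balance via Shoenfield absoluteness. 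Theorem~\ref{completiontheorem} then shows every consistent $K_\gs$ theory extends to such a theory, by adjoining new atoms $\langle K_0,K_1\rangle$, indexed by pairs of compact sets, that serve as interpolants. Finally, your compactness argument for clause (3) would need compactness of $2^X$ for uncountable $X$, which is itself a form of BPI. It is also unnecessary: the union of the generic filter is already a completion, and $\gs$-closure ensures no new $K_\gs$ theories appear.
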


\noindent The fragment of BPI quoted in Theorem~\ref{maintheorem} implies for example that every $F_\gs$-ideal on $\gw$ can be extended to a maximal ideal (Example~\ref{ultrafilterexample}), or that de Bruijn--Erd{\H o}s theorem holds for $K_\gs$ graphs on locally compact Polish spaces (Example~\ref{coloringexample}). The fragment can easily be extended to include a number of other statements, such as ``the set of analytic subsets of $\cantor$ can be linearly ordered''.

Our framework is a streamlined presentation of compactly balanced forcing \cite[Section 9.2]{z:geometric}. In order to state the proofs in the most efficient way, the whole paper uses the geometric axiomatization of the Solovay model \cite{z:reloadedA}. Our models are then all balanced forcing extensions of the Solovay model, as axiomatized in \cite{z:reloadedB}. 

In Section~\ref{0section}, we identify the class of balanced theories, which are propositional theories to which it is possible to add a consistent complete extension by a rather canonical forcing (a Boolean balanced forcing of Definition~\ref{maindefinition}) in a controlled way. Section~\ref{balancedsection} contains a long list of natural balanced theories, Section~\ref{imbalancedsection} contains a list of natural theories which are not balanced. In Section~\ref{preservationsection} we use a suitable infinitary partition theorem to show (as a special case of the powerful Theorem~\ref{preservationtheorem}) that Boolean balanced forcings do not add Vitali sets. This makes it possible to prove Theorem~\ref{maintheorem} at the end of the section. The appendix Section~\ref{appendixsection} spells out the geometric axiomatization of the Solovay model for reference purposes.

While the methods of the present paper provide an insight into Conjecture~\ref{mainconjecture}, they can hardly be used to provide a complete affirmative solution. The simplest questions remaining open are the following.

\begin{question}
Is ZF+DC+``$\power(\mathbb{R})$ can be linearly ordered" consistent with nonexistence of Vitali sets? See also \cite{schilhan:ordering, pincus:addingdc}.
\end{question}

\begin{question}
Is ZF+DC+``there is a maximal ideal extending the asymptotic density zero ideal on $\gw$'' consistent with nonexistence  of Vitali sets?
\end{question}

\noindent The most obvious technical open problem concerning the methods of the present paper is the following.

\begin{question}
(In the Solovay model) Characterize those ideals $I$ on $\gw$ for which the ultrafilter theory of $I$ is balanced.
\end{question}

\noindent For the notation and terminology, we use the set theoretic standard of \cite{jech:newset}, the geometric axiomatization of the Solovay model of \cite{z:reloadedA}, and the axiomatization of balanced forcing of \cite{z:reloadedB}. A Vitali set is a set of reals which to every real contains exactly one of its rational shifts. DC denotes the Axiom of Dependent Choices, which is the statement that every partially ordered set contains either a minimal element or an infinite strictly descending sequence.

	\section{Propositional theories}
	\label{0section}

A search for completions of consistent propositional theories comes up naturally in many different contexts. We include a brief list of examples to refer to later in the paper.

\begin{example}
\label{ultrafilterexample}
	(Ultrafilter theory of an ideal)
Let $A$ be a set and $I$ an ideal on it. Consider the theory $T$ using subsets of $A$ as variables in its propositional language and including the sentences $A$, for all sets $b, c\subseteq A$ the sentence $(b\land c)\to (b\cap c)$, for sets $b\subseteq c\subseteq A$ the sentence $b\to c$, for sets $b\subseteq A$ the sentence $b\lor (A\setminus b)$ and for sets $b\in I$ the sentence $\lnot b$. The completions correspond to ultrafilters on $A$ disjoint from $I$. Note that in this theory, every formula is provably equivalent to an atomic one.
\end{example}

\begin{example}
\label{coloringexample}
	(Coloring theory of a graph)
	Let $n\in\gw$ be a natural number and $\langle V, G\rangle$ be a graph all of whose finite subgraphs have chromatic number $\leq n$. Consider the theory $T$ in language $\mathcal{L}_T$, whose propositional variables come from the set $V\times n$, containing the sentences $\bigvee_{i\in n}\langle v, i\rangle$  and $\lnot(\langle v, i\rangle\land\langle v, j\rangle)$ for each $v\in V$ and distinct numbers $i, j\in n$, and $\lnot (\langle v_0, i\rangle\land\langle v_1, i\rangle)$ for each $\{v_0, v_1\}\in G$ and $i\in n$. Consistent completions correspond to $G$-colorings.
\end{example}

\begin{example}
	\label{linearizationexample}
	(Linearization theory) Let $\langle A, \leq\rangle$ be a partial order. Let $T$ be a propositional theory on the set $A^2$ of propositional variables containing the following statements:
	
	\begin{enumerate}
		\item $\langle a, b\rangle$ for any $\langle a, b\rangle\in\leq$;
		\item exactly one of $\langle a, b\rangle$ and $\langle b, a\rangle$ holds, for any distinct $a, b\in A$;
		\item $\langle a, b\rangle$ and $\langle b, c\rangle$ implies $\langle a, c\rangle$ for any $a, b, c\in A$.
	\end{enumerate}
	
	\noindent Consistent completions correspond to linearizations of $\leq$.
\end{example}

\noindent There are many ways in which a completion of a given propositional theory could be added by forcing. Among them, the current paper investigates the most canonical option only.
	
	\begin{definition}
		Let $T$ be a consistent propositional theory in language (set of propositional variables) $\mathcal{L}_T$. The \emph{associated poset} $P_T$ consists of well-orderable sets $p$ of formulas in the language $\mathcal{L}_T$ such that $T\cup p$ is consistent. The ordering is that of reverse inclusion.
	\end{definition}

\noindent The associated poset is designed to add (as the union of the generic filter) a consistent completion of $T$. One important observation is that under DC (which is part of our base theory), countable unions of well-orderable sets are well-orderable, so the poset $P_T$ is $\gs$-closed. In addition, $\gs$-closed forcings preserve DC, so the poset $P_T$ is a suitable tool for adding an instance of BPI while preserving DC. In general though, it is difficult to assert more control over this poset. The following definition, reminiscent of funicular preorders of \cite{azul:funicular}, isolates a tool which is useful in this direction

\begin{definition}
	\label{maindefinition}
	Let $T$ be a consistent propositional theory. The theory $T$ is \emph{balanced} if for all sets $x, y_0, y_1$ of ordinals such that $T\in\vd_x$ and $y_0\forkindep_xy_1$ and all formulas $\phi_0\in\vd_{xy_0}, \phi_1\in\vd_{xy_1}$ in the language $\mathcal{L}_T$, if $T\vdash\phi_0\to\phi_1$ then there is an interpolant $\psi\in\vd_x$, a formula such that $T\vdash\phi_0\to\psi$ and $T\vdash\psi\to\phi_1$.
	A poset is \emph{Boolean balanced} if it is a poset associated with a balanced theory.
\end{definition}

\noindent The main theorem of this section shows that the adjective in Definition~\ref{maindefinition} is appropriate.

\begin{theorem}
	\label{balancetheorem}
	If $T$ is a balanced theory, then the associated poset $P_T$ is balanced. In addition, if $x$ is a set of ordinals such that $T\in\vd_x$,
	
	\begin{enumerate} 
		\item every condition in $P_T\cap\vd_x$ can be extended to a complete theory in the language $\mathcal{L}_T\cap\vd_x$ consistent with $T$;
		\item if $q$ is a complete theory as in the previous item then $O_q=\{r\in P_T\colon r\leq q\}\subset P_T$ is an open set balanced over $x$;
		\item every set balanced over $x$ intersects exactly one open set from the previous item.
	\end{enumerate}
\end{theorem}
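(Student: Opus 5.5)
The plan is to prove items (1)--(3) first and then read off balance of $P_T$ from them, using the balanced-forcing axiomatization of \cite{z:reloadedB}. In the Solovay model, $P_T$ is balanced if for every $x$ with $T\in\vd_x$ there is a balanced virtual condition over $x$ below any given condition in $\vd_x$. Here a set $\tau$ is balanced over $x$ if for all $y_0\forkindep_x y_1$ and all conditions $r_0\leq\tau$ in $\vd_{xy_0}$ and $r_1\leq\tau$ in $\vd_{xy_1}$, $r_0$ and $r_1$ are compatible. The open sets $O_q$ play the role of the virtual conditions.

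\emph{Item (1).} Let $p\in P_T\cap\vd_x$. In the Solovay model the set $\mathcal{L}_T\cap\vd_x$, and hence the set of formulas in that language, is well-orderable via the canonical well-order of $\vd_x$ objects. I will run the usual transfinite Lindenbaum construction along this well-order, starting from $T\cup p$. At each step, add the next formula $\phi$ or $\lnot\phi$, whichever keeps consistency; at limits, take unions, which preserves consistency by compactness. The result is a complete theory $q$ in the language $\mathcal{L}_T\cap\vd_x$ containing $p$, consistent with $T$. It is well-orderable, so $q\in P_T$, and it is also in $\vd_x$, though that is not needed.

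\emph{Item (2).} Let $q$ be as in item (1). Suppose $y_0\forkindep_x y_1$, and $r_0\in\vd_{xy_0}$, $r_1\in\vd_{xy_1}$ are conditions below $q$. I must show that $T\cup r_0\cup r_1$ is consistent. If not, compactness gives finite fragments whose conjunctions are $\phi_0\in\vd_{xy_0}$ and $\phi_1'\in\vd_{xy_1}$ with $T\vdash\phi_0\to\lnot\phi_1'$. Here I need to argue that the finite conjunctions can be chosen definably. The cleanest route is to use the whole sets: consistency fails iff some finite subsets fail. The set of such finite pairs is in $\vd_{xy_0y_1}$, but I need each side individually definable. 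Instead, I will take $\phi_0$ ranging over finite subsets of $r_0$, which are themselves elements of $\vd_{xy_0}$ because $r_0$ is well-ordered in $\vd_{xy_0}$; finite subsets of a well-orderable $\vd$ set are $\vd$ from the same parameter. Balance of $T$ then yields an interpolant $\psi\in\vd_x$ with $T\vdash\phi_0\to\psi$ and $T\vdash\psi\to\lnot\phi_1'$. Since $q$ is complete in $\mathcal{L}_T\cap\vd_x$ and $\psi$ is a formula of that language, either $\psi\in q$ or $\lnot\psi\in q$. If $\psi\in q\subseteq r_1$, then $r_1$ proves $\lnot\phi_1'$, contradicting consistency of $T\cup r_1$. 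If $\lnot\psi\in q\subseteq r_0$, then $r_0$ proves $\lnot\phi_0$, contradicting consistency of $T\cup r_0$. The main care point is the definability of the finite fragments and the fact that $\psi$ really lies in $\mathcal{L}_T\cap\vd_x$, since it is a formula over variables in $\vd_x$.

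\emph{Item (3).} Let $\tau$ be balanced over $x$. For existence, I will first show that for each $\vd_x$ formula $\psi$, $\tau$ decides $\psi$ in the sense that $\tau$ is compatible with exactly one of $\{\psi\}$ and $\{\lnot\psi\}$, robustly under mutually generic extensions. Suppose $\tau$ had extensions $r_0\ni\psi$ and $r_1\ni\lnot\psi$. Move these to mutually generic $y_0, y_1$ over $x$ using the homogeneity of the Solovay model, with $\tau$ in $\vd_x$. The two extensions would be incompatible, contradicting balance. Collecting the decided formulas gives a complete $q$, and balance shows $\tau$ meets $O_q$; I expect the precise formalization of ``$\tau$ meets $O_q$'' in the virtual sense of \cite{z:reloadedB} to be the main obstacle. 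For uniqueness, distinct complete $q,q'$ differ on some $\psi\in\vd_x$, so $O_q$ and $O_{q'}$ are incompatible, and $\tau$ cannot meet both without violating balance.

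Finally, balance of $P_T$ follows: by item (1), every condition in $\vd_x$ lies above some $q$, and by item (2), $O_q$ is balanced.
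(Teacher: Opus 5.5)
Your items (1) and (2), and the final deduction that $P_T$ is balanced, follow the paper's proof: a Lindenbaum construction along the canonical well-order of $\vd_x$, then an interpolant $\psi\in\vd_x$ that the complete $q$ must decide. Two small corrections there. First, you need Fact~\ref{facticfact}(1) to know that $p\subset\vd_x$, so that $p$ lies in the language $\mathcal{L}_T\cap\vd_x$ at all. Second, $q\in\vd_x$ \emph{is} needed. A set balanced over $x$ must belong to $\vd_x$, and $q=\bigcap O_q$, so a complete theory on $\mathcal{L}_T\cap\vd_x$ outside $\vd_x$ never yields a set balanced over $x$. Item (2) should therefore be stated for $q\in\vd_x$, as the paper does.

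The genuine gap is the existence half of item (3). Your decision step is correct and also gives uniqueness: the sets $\{r\in O\colon\psi\in r\}$ and $\{r\in O\colon\lnot\psi\in r\}$ are open and in $\vd_x$, so if both were nonempty, the extension property would give incompatible conditions in $\vd_{xy_0}$ and $\vd_{xy_1}$ with $y_0\forkindep[x]y_1$. But you never show that the collected $q$ is consistent with $T$, nor that $O$ contains a condition extending $q$. Saying that balance shows $O$ meets $O_q$ is exactly what remains to be proved. There is also no virtual-condition issue to formalize: here balanced sets are honest $\vd_x$ open subsets of $P_T$, and ``intersects'' is literal intersection.

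The missing argument is short. If $s\in O$ and $\psi\in q$, then $T\cup s\cup\{\psi\}$ is consistent. Otherwise $s\cup\{\lnot\psi\}$ is a condition below $s$, hence in $O$, so $\lnot\psi$ would also be realized in $O$, contradicting the decision step. Iterating, $r\cup F\in O$ for every $r\in O$ and every finite $F\subseteq q$. Since proofs are finite, $r\cup q$ is then a condition (well-orderable because $q\subset\vd_x$) lying in $O\cap O_q$, and this also shows $q$ is consistent with $T$.

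The paper avoids this bookkeeping differently. It first extends conditions in $O$ to ones deciding every $\vd_x$-formula, which is possible since $O$ is open and these formulas are well-orderable. It then takes two such conditions $r_0\in\vd_{xy_0}$ and $r_1\in\vd_{xy_1}$ in $O$ with $y_0\forkindep[x]y_1$; by balance they agree on all $\vd_x$-formulas. Setting $q=r_0\cap\vd_x=r_1\cap\vd_x$, the set $q$ lies in $\vd_x$ by Fact~\ref{facticfact}(2), is consistent as a subset of $r_0$, and $r_0\in O\cap O_q$.
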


\noindent As for the notation, observe that a complete theory in the language $\mathcal{L}_T\cap\vd_x$ consistent with $T$ is an element of $\vd_x$ is in fact a condition in $P$, and it is $\leq$-minimal condition in $P_T\cap\vd_x$.

\begin{proof}
	Let $x$ be a set of ordinals such that $T\in\vd_x$. For (1), note that every condition in $P\cap\vd_x$ is in fact a subset of $\vd_x$ by Fact~\ref{facticfact}(1). Thus, using the canonical well-ordering of $\vd_x$, any condition in $P\cap\vd_x$ can be extended to a complete theory in the language $\mathcal{L}_T\cap\vd_x$ which is consistent with $T$ and an element of $\vd_x$. 
	
	For (2), let $q\in\vd_x$ be such a complete theory in $\vd_x$ in the language $\mathcal{L}\cap\vd_x$ consistent with $T$. Let $y_0, y_1$ be sets of ordinals such that $y_0\forkindep[x] y_1$, and $r_0\in\vd_{xy_0}$ and $r_1\in\vd_{xy_1}$ be conditions in the open set $O_q\subset P_T$; without loss, assume that both are closed under conjunctions. Suppose towards a contradiction that $r_0$ and $r_1$ have no common lower bound. This means that there are formulas $\phi_0\in r_0$ and $\phi_1\in r_1$ such that $r_0\land r_1$ is inconsistent with $T$; in other words, $T\vdash\phi_0\to\lnot\phi_1$.  By the balance assumption on the theory $T$, there is an interpolant $\psi\in\vd_x$ such that $T$ proves $\phi_0\to\psi$ and $\psi\to\lnot\phi_1$. Now, the completeness assumption on $q$ implies that either $\psi\in q$ or $\lnot\psi\in q$. The former option would mean that $r_1$ is inconsistent, while the latter option would mean that $r_0$ is inconsistent. In both cases, a contradiction is reached.
	
	For (3), suppose that $O\subset P$ is a set balanced over $x$; we must find $q\in P$ as in (2) such that $O\cap O_q\neq 0$. Note that the set of all $\vd_x$ formulas in the language $\mathcal{L}_T$ is well-orderable, so every condition can be extended to a condition $r$ such that for every $\vd_x$-formula $\phi$, either $\phi\in r$ or $\lnot\phi\in r$. By the extension property of $\forkindep$, there are sets $y_0, y_1$ of ordinals such that $y_0\forkindep[x]y_1$ and conditions $r_0\in\vd_{xy_0}$ and $r_1\in\vd_{xy_1}$ in $O$ such that for every $\vd_x$-formula $\phi$, either $\phi\in r_0$ or $\lnot\phi\in r_0$, and  either $\phi\in r_1$ or $\lnot\phi\in r_1$. The conditions $r_0, r_1$ are compatible in the poset $P$, so the decision regarding $\phi$ must be the same in $r_0$ and $r_1$. So, let $q=r_0\cap\vd_x=r_1\cap\vd_x$. This is a complete theory in the language $\mathcal{L}_T\cap\vd_x$; in addition, Fact~\ref{facticfact}(2) shows that $q\in\vd_{xy_0}\cap\vd_{xy_1}=\vd_x$. Since both $r_0, r_1\in O$ are conditions stronger than $q$, it follows that $O\cap O_q\neq 0$ as desired.
	
	Items (1-3) together imply the balance of the poset $P$.  
\end{proof}

\noindent While in many natural cases, the status of balance of a given propositional theory is easy to check, in many other cases it is quite challenging. The following two sections are devoted to a study of many particular cases of the central question:

\begin{question}
	When is a consistent propositional theory $T$ balanced? Can a consistent theory be always extended to a consistent balanced theory, possibly in a larger language?
\end{question}

\section{Balanced theories}
\label{balancedsection}

In this section, we provide positive results: criteria which imply that certain classes of propositional theories are balanced. For the ultrafilter theories, we supply only a rather restrictive criterion, and negative results in Section~\ref{imbalancedsection} suggest that nothing much better may be available.

\begin{theorem}
\label{welltheorem}
Let $A$ be a well-orderable set, and let $I$ be an ideal on $A$. Suppose that

\begin{enumerate}
\item[(*)] $I$ is generated by a well-orderable collection of sets.
\end{enumerate}

\noindent Then the ultrafilter theory $T$ of $I$ is balanced.
\end{theorem}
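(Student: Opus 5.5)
Since every formula of the ultrafilter theory $T$ is $T$-provably equivalent to an atomic one (Example~\ref{ultrafilterexample}), the plan is to work with atomic formulas only. Fix sets of ordinals $x, y_0, y_1$ with $T\in\vd_x$ and $y_0\forkindep_x y_1$, and formulas $\phi_0\in\vd_{xy_0}$, $\phi_1\in\vd_{xy_1}$ with $T\vdash\phi_0\to\phi_1$. The first step replaces $\phi_0,\phi_1$ by the sets $b_0,b_1\subseteq A$ they are equivalent to. The canonical choice of these sets depends only on $\phi_i$ and $T$, so $b_i\in\vd_{xy_i}$. Completions of $T$ correspond to ultrafilters on $A$ disjoint from $I$, so $T\vdash b_0\to b_1$ says exactly that $b_0\setminus b_1\in I$: if $b_0\setminus b_1\notin I$, then $\{b_0\setminus b_1\}$ together with the dual filter of $I$ generates a filter which extends, in the ambient model with choice enough for well-orderable objects (or simply by compactness of the propositional theory), to an ultrafilter containing $b_0$ but not $b_1$.

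The second step uses (*). Since $I\in\vd_x$ and $I$ is generated by a well-orderable family, there is such a family in $\vd_x$; indeed we may take a well-ordered generating sequence $\langle c_\alpha\colon\alpha<\kappa\rangle\in\vd_x$ of minimal length under the canonical well-ordering of $\vd_x$. Then every $c_\alpha$ lies in $\vd_x$. Pick finitely many generators whose union $c\in\vd_x$ contains $b_0\setminus b_1$. Similarly, since $A$ is well-orderable and $A\in\vd_x$, every element of $A$ is in $\vd_x$, and $A$ admits a well-ordering in $\vd_x$.

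The third step finds the interpolant. The natural candidate is a set $d\in\vd_x$ with $b_0\setminus c\subseteq d\subseteq b_1\cup c$; then $d\setminus b_1\subseteq c\in I$ and $b_0\setminus d\subseteq c\in I$, so $\psi=d$ works. Consider $e_0=b_0\setminus c\in\vd_{xy_0}$ and $e_1=b_1\cup c\in\vd_{xy_1}$ with $e_0\subseteq e_1$. We need a set in $\vd_x$ sandwiched between them, and we take $d=\{a\in A\colon a\in e_0\}$ viewed via independence. Concretely, for each $a\in A$ (with $a\in\vd_x$), the statement $a\in e_0$ is decided in $\vd_{xy_0}$ and $a\in e_1$ in $\vd_{xy_1}$. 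The key move is to use Fact~\ref{facticfact}(2), $\vd_{xy_0}\cap\vd_{xy_1}=\vd_x$, applied to suitable sets. Put $d=\bigcup\{z\in\vd_x\colon z\subseteq A,\ z\subseteq e_1\}$, which is the largest $\vd_x$-definable subset of $e_1$, provided it exists as a union of a collection that is well-orderable in the right model. Alternatively, run the symmetric argument via the geometric axiomatization: $e_0\subseteq e_1$ implies $e_0\subseteq e_0'\subseteq e_1$ for every $\vd_{xy_0'}$ copy $e_0'$ with $y_0'$ independent, and then intersect or union over the orbit.

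The main obstacle is this step: showing that a subset of a well-orderable set $A\subseteq\vd_x$ lying between sets from independent extensions contains an $\vd_x$ set. The first thing I would try is the following. Let $d$ be the union of all $\vd_{xy}$-images of $e_0$ over $y$ realizing the type of $y_0$ over $x$ in a suitable collapse. This is $\vd_x$ by homogeneity, contains $e_0$, and is contained in $e_1$ because each copy $y$ can, by the extension property of $\forkindep$, be chosen independent from $y_1$ over $x$, giving $e_0^y\subseteq e_1$.
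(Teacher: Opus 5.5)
\paragraph{Gap: your second step.} You claim that because $I\in\vd_x$ has some well-orderable generating family, it has one in $\vd_x$, chosen ``of minimal length under the canonical well-ordering of $\vd_x$.'' That well-ordering can only select among objects that are already in $\vd_x$. The set of minimal-length well-ordered generating sequences of $I$ is a nonempty set in $\vd_x$, but a nonempty $\vd_x$ set need not have any element in $\vd_x$ (think of $\mathbb{R}\setminus\vd_x$). A family $J$ as in (*) lives in some $\vd_{xz}$, and Fact~\ref{facticfact}(1) only gives $J\subset\vd_{xz}$.

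What you need is that $I\cap\vd_x$ generates $I$. This is true, but it is the real content of the proof and needs the geometric axioms:
\begin{enumerate}
\item Let $a\in I\cap\vd_{xy}$. In your application $a=b_0\setminus b_1$, with $y$ coding $y_0,y_1$.
\item By the extension property, find $z$ with $y\forkindep_x z$ such that $\vd_{xz}$ contains a well-orderable generating family $J$ closed under finite unions.
\item Then $J\subset\vd_{xz}$, so some $a'\in J$ covers $a$.
\item Since $A\subset\vd_x$, the sets $a\in\vd_{xy}$ and $A\setminus a'\in\vd_{xz}$ are disjoint subsets of $\vd_x$. Fact~\ref{facticfact}(3) gives a separating set in $\vd_x$; its intersection with $A$ lies in $I$ and covers $a$.
\end{enumerate}
This is where the well-orderability of $A$ is genuinely used; in your write-up it plays essentially no role.

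\paragraph{Your third step.} The step you call the main obstacle is a direct application of Fact~\ref{facticfact}(3) to $e_0\in\vd_{xy_0}$ and $A\setminus e_1\in\vd_{xy_1}$, which are disjoint subsets of $\vd_x$. That is exactly how the paper finishes. Your two attempts there do not work as written.
\begin{itemize}
\item $\bigcup\{z\in\vd_x\colon z\subseteq e_1\}$ is just $e_1$, because every singleton $\{a\}$ with $a\in A$ belongs to $\vd_x$. So it is not in $\vd_x$ in general.
\item In the ``union over copies'' sketch, the union runs over all copies $y$ of $y_0$, not only those independent of $y_1$. The inclusion in $e_1$ would have to be argued pointwise: for each $a$ in the union, find a witnessing copy independent of $y_1$. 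You would also need the product property to transfer $e_0\subseteq e_1$ to such copies. In effect you would be reproving Fact~\ref{facticfact}(3).
\end{itemize}

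\paragraph{Minor point in your first step.} Do not appeal to extending a filter to an ultrafilter; there are no nonprincipal ultrafilters on $\gw$ under this axiomatization. Instead, a proof of $b_0\to b_1$ from $T$ uses only finitely many axioms $\lnot g_1,\dots,\lnot g_k$ with $g_j\in I$. If $b_0\setminus b_1\notin I$, the principal ultrafilter at any point of $(b_0\setminus b_1)\setminus(g_1\cup\dots\cup g_k)$ satisfies those axioms together with $b_0\land\lnot b_1$.
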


\noindent We do not know if the criterion here is in fact necessary and sufficient for the conclusion.

\begin{proof}
	Let $x$ be a set of ordinals such that $A, I\in\vd_x$. Fact~\ref{facticfact}(1) shows that $A\subset\vd_x$ holds. We also have the following:
	
	\begin{claim}
		\label{helpyclaim}
		The ideal $I$ is generated by sets in $I\cap\vd_x$.
	\end{claim}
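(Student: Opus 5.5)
The plan is to use the well-orderability hypothesis (*) together with the definability facts about $\vd_x$ recorded in Fact~\ref{facticfact}. The difficulty is that an arbitrary generating family need not lie in $\vd_x$. A family lying in $\vd_x$ can be produced by a canonical choice, and it then remains to show this family still generates $I$.

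\emph{The canonical family.} Since $A$ is well-orderable and $A\in\vd_x$, there is a well-ordering of $A$ in $\vd_x$: the $\vd_x$-least one in the canonical well-ordering of $\vd_x$. Using it, $A$ is coded by an ordinal, and each subset of $A$ by a set of ordinals, all definably from $x$ and $A$. By (*), some generating family for $I$ is well-orderable, so some generating family has order type an ordinal $\kappa$. Fix the least such $\kappa$. This $\kappa$ is definable from $A$ and $I$, so $\kappa\in\vd_x$.

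\emph{Descending to $\vd_x$.} I will use Fact~\ref{facticfact}(1): a well-orderable set that belongs to $\vd_x$ is a subset of $\vd_x$. The point is to apply this to a family of generators that itself lies in $\vd_x$. Consider the set $\mathcal{G}$ of all families $\{b_\alpha\colon\alpha\in\kappa\}$ that generate $I$. Each such family, coded relative to the fixed ordering of $A$, is a subset of $\kappa\times A$, and $\mathcal{G}\in\vd_x$.

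\emph{The main obstacle.} I must get a single generating family in $\vd_x$. The set $\mathcal{G}$ is definable but need not be well-orderable, and picking one element of it requires choice. I would first try to avoid choosing a family at all, by working element by element. Take any $b\in I$ and any generating family $\langle b_\alpha\colon\alpha\in\kappa\rangle$. Then $b\subseteq b_{\alpha_0}\cup\dots\cup b_{\alpha_n}$ for some finitely many indices. It would therefore suffice to show that the set $D=I\cap\vd_x$ is cofinal in $I$ under inclusion modulo finite unions.

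To prove cofinality I would apply Fact~\ref{facticfact}(1) to the well-orderable set $\{b_\alpha\colon\alpha\in\kappa\}$, but this needs the set itself to be in $\vd_x$. The fallback is Fact~\ref{facticfact}(1) in the form ``every well-orderable set in $\vd_x$ is a subset of $\vd_x$'' applied to $I$ restricted suitably, or a homogeneity argument for the Solovay-model setting. The homogeneity argument goes as follows. The generators $b_\alpha$ lie in some $\vd_{xy}$. One can pick $y_0\forkindep_x y_1$ with generating families $\vec b^0\in\vd_{xy_0}$ and $\vec b^1\in\vd_{xy_1}$. Each $b^0_\alpha$ is covered by a finite union of sets $b^1_\beta$, and by Fact~\ref{facticfact}(2) that finite union intersected back gives a set in $\vd_{xy_0}\cap\vd_{xy_1}=\vd_x$. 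More precisely, for each $b^0_\alpha$ the set $b^0_\alpha\cup\bigcup_i b^1_{\beta_i}$ is not obviously in $\vd_x$, so instead I would cover each $b^1_\beta$ by finitely many $b^0_\gamma$'s. Then $c=\bigcup_i b^0_{\gamma_i}\in\vd_{xy_0}$ lies in $I$ and contains $b^1_\beta$. The key remaining step, which I expect to be the hardest, is to symmetrize this so the covering set lies in both models; from there Fact~\ref{facticfact}(2) puts it in $\vd_x$, and the claim follows.
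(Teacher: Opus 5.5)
\textbf{There is a genuine gap.} Your proposal stops exactly where the work has to be done. You reduce the claim to covering each generator $b^1_\beta\in\vd_{xy_1}$ by a member of $I\cap\vd_x$, and you produce $c=\bigcup_i b^0_{\gamma_i}\in I\cap\vd_{xy_0}$ with $b^1_\beta\subseteq c$. You then leave the ``symmetrization'' open, and the tool you intend for it, Fact~\ref{facticfact}(2), cannot do the job. Nothing forces the particular set $c$, or any finite union of generators from one side, to also lie in $\vd_{xy_1}$. So there is no way to move $c$ into $\vd_{xy_0}\cap\vd_{xy_1}$.

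Your earlier suggestion that a finite union of $b^1_\beta$'s ``intersected back'' lands in $\vd_x$ is unjustified in the same way. The detour through $\kappa$ and $\mathcal{G}$ leads nowhere. The fallback of applying Fact~\ref{facticfact}(1) to $I$ also fails, since $I$ is typically not well-orderable (already for the Fr\'echet ideal on $\gw$).

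\textbf{The missing idea} is the separation property, Fact~\ref{facticfact}(3). It produces a \emph{new} set in $\vd_x$ between the two, rather than moving $c$. This is where the well-orderability of $A$ is really used: by Fact~\ref{facticfact}(1), $A\subseteq\vd_x$. Hence $b^1_\beta\in\vd_{xy_1}$ and $A\setminus c\in\vd_{xy_0}$ are disjoint subsets of $\vd_x$ lying in independent extensions. Fact~\ref{facticfact}(3), with symmetry of independence, gives $B\in\vd_x$ with $b^1_\beta\subseteq B$ and $B\cap(A\setminus c)=0$. Then $B\cap A\in I\cap\vd_x$ covers $b^1_\beta$, and your reduction finishes the proof.

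\textbf{Comparison with the paper.} The paper runs the same idea more economically, with a single generating family. It takes an arbitrary $a_0\in I$ and places it in some $\vd_{xy_0}$, since every set is definable from a set of ordinals. The extension property gives $y_1$ independent of $y_0$ over $x$ with a well-orderable generating family $J\in\vd_{xy_1}$, so $J\subseteq\vd_{xy_1}$ by Fact~\ref{facticfact}(1). It picks $a_1$ from (finite unions of) $J$ covering $a_0$. Finally it interpolates $b\in\vd_x$ with $a_0\subseteq b\subseteq a_1$ via Fact~\ref{facticfact}(3).
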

	
	\begin{proof}
Suppose towards a contradiction that this fails. Then there is a set $y_0$ of ordinals and a set $a_0\in I\cap\vd_{xy_0}$ which is not a subset of any set in $I\cap \vd_x$. By the extension property, there is a set $y_1$ of ordinals such that $y_0\forkindep[x]y_1$ and $\vd_{xy_1}$ contains a well-orderable set $J\subseteq I$ which generates $I$. By Fact~\ref{facticfact}(1), $J\subset\vd_{xy_1}$ holds. Thus, there is a set $a_1\in J$ such that $a_0\subseteq a_1$ holds. By the well-orderability assumption on the set $A$, it is the case that $A\subset\vd_x$, so $a_0$ and $A\setminus a_1$ are disjoint subsets of $\vd_x$ in independent extensions. Apply Fact~\ref{facticfact}(3) to find a set $b\subset A$ in $\vd_x$ such that $a_0\subseteq b\subseteq a_1$. Then $b\in I$ must hold, contradicting the choice of $a_0$.
	\end{proof}
	
\noindent Now, suppose that $x, y_0, y_1$ are sets of ordinals such that $A, I\in\vd_x$ and $y_0\forkindep[x]y_1$, and $\phi_0\in\vd_{xy_0}$ and $\phi_1\in\vd_{xy_1}$ are formulas in the ultrafilter language such that $T\vdash\phi_0\to\phi_1$. There are sets $a_0, a_1\subset A$ obtained by replacing Boolean operations with set theoretic ones such that $T\vdash\phi_0\liff a_0$ and $T\vdash\phi_1\liff a_1$. It must be the case that $a_0\setminus a_1\in I$. By Claim~\ref{helpyclaim}(1) there must be a set $c\in I\cap\vd_x$ such that $a_0\setminus c\subset a_1$. By Fact~\ref{facticfact}(3) there is a set $d\in\vd_x$ such that $a_0\setminus c\subseteq d\subseteq a_1$. Then the propositional variable $d$ is the desired interpolant.
\end{proof}

\begin{example}
	The ultrafilter theory of the ideal of finite sets on $\gw$ is balanced.
\end{example}

\begin{example}
	The ultrafilter theory of the ideal of countable sets on $\gw_1$ is balanced.
\end{example}

\begin{example}
	In the Solovay model, every closed unbounded subset of $\gw_1$ has a club subset in $\hvd$. (We do not know if this follows from the axiomatization.) This means that the ideal $I$ of nonstationary subsets of $\gw_1$ has a well-orderable basis, so the ultrafilter theory of $I$ on $\gw_1$ is balanced.
\end{example}

\noindent The most powerful technical tool in this paper is a criterion for balance of $K_\gs$-theories on locally compact Polish spaces. Here, we must first explain topologization of theories used in this paper.

\begin{definition}
	Let $X$ be a Polish space, viewed as a space of atomic propositional variables. Let $S$ be a finite set of logical connectives and parentheses of that language. $X^*$ is the set of all formulas in this language. To topologize it, consider the space $X\cup S$ where $S$ is equipped with discrete topology, the spaces $(X\cup S)^d$ for different $d\in\gw$ equipped with the product topology, and topologize $(X\cup S)^{<\gw}$ as a disjoint sum $\bigcup_d(X\cup S)^d$. $X^*$ is then considered as a closed subset of $(X\cup S)^{<\gw}$.
\end{definition}

\noindent The main focus in this paper is the class of $K_\gs$-theories on locally compact Polish spaces. The main descriptive reason for this restriction is the following simple observation.

\begin{proposition}
If $X$ is a locally compact space, then so is $X^*$. In such a space, deductive closures of $K_\gs$-theories are $K_\gs$ again. 
\end{proposition}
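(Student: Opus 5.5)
The plan is to treat the two assertions separately: first local compactness of $X^*$, then the $K_\gs$ property of deductive closures.

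For local compactness, recall that $X^*$ sits as a closed subset of $(X\cup S)^{<\gw}=\bigcup_d (X\cup S)^d$. Since $S$ is finite and discrete, $X\cup S$ is a disjoint sum of the locally compact Polish space $X$ and a finite discrete space, so it is locally compact Polish. Finite products of locally compact Polish spaces are locally compact Polish. A countable disjoint sum of locally compact Polish spaces is again locally compact Polish. Finally, a closed subspace of a locally compact space is locally compact. This settles the first sentence. In addition, every locally compact Polish space is $K_\gs$, being a countable union of compact sets; this is used below.

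For the second sentence, let $T\subseteq X^*$ be $K_\gs$, say $T=\bigcup_n C_n$ with each $C_n$ compact. I will write the deductive closure as $\bigcup_{k}\bigcup_{n_1,\dots,n_k}\{\phi\in X^*\colon C_{n_1}\times\dots\times C_{n_k}$ contains $(\theta_1,\dots,\theta_k)$ with $\bigwedge_i\theta_i\to\phi$ a tautology$\}$. By compactness of propositional logic, this countable union is exactly the set of consequences of $T$. It then suffices to show that each piece $D=\{\phi\colon\exists\bar\theta\in K\ \ \bar\theta\vdash\phi\}$ is $K_\gs$, where $K=C_{n_1}\times\dots\times C_{n_k}$ is compact.

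The key observation is that the relation $R=\{(\bar\theta,\phi)\in (X^*)^k\times X^*\colon \bigwedge\bar\theta\to\phi \text{ is a tautology}\}$ is closed. Whether a formula is a tautology depends only on its syntactic shape, meaning the sequence of connectives, together with the equality pattern among the atomic positions. For a fixed length and connective pattern, the set of formulas of that shape is clopen in the relevant finite power, and the condition ``positions $i,j$ carry equal atoms'' is closed, since $X$ is Hausdorff. A tautology is preserved under identifying more atoms. So $R$, restricted to each clopen shape-block, is a finite union of sets defined by requiring certain equalities of atoms, and these sets are closed. Because the blocks are clopen and form a locally finite partition, $R$ is closed.

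I expect this step to be the main obstacle: one must carefully check that ``minimal equality patterns forcing tautology'' give a finite union of closed sets, and that identifying atoms never destroys tautologyhood. Granting it, $R\cap (K\times X^*)$ is closed. Writing $X^*=\bigcup_m L_m$ with $L_m$ compact, the set $R\cap(K\times L_m)$ is compact, and its projection to the last coordinate is compact. So $D=\bigcup_m \mathrm{proj}(R\cap (K\times L_m))$ is $K_\gs$, and so is the whole deductive closure, as a countable union of such sets.
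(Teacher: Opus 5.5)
Your argument is correct, but you parametrize a different object by compact sets than the paper does. The paper works with formal derivations. It lets $D_n$ be the set of proofs of length $n$ whose lines have length at most $n$, whose atoms lie in the $n$-th compact piece $K_n$ of $X$, and whose hypotheses come from $C_n$. It asserts that $D_n$ is compact ``by the converging subsequence criterion'' and projects to the last line.

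You instead replace derivations by the entailment relation $R$ between a finite tuple of hypotheses and a conclusion. You show that $R$ is closed using the shape and equality-pattern analysis together with the fact that identifying atoms preserves tautologies, and then project $R\cap(K\times L_m)$. Your route needs no particular proof calculus and no bound on the atoms or lengths of intermediate lines of a proof. It also spells out the closedness that the paper leaves to the reader. The paper's route stays purely syntactic and never passes through semantic validity. Both rest on the same fact: validity is determined by the shapes of the strings and by which atomic positions carry equal atoms, and equality of atoms is a closed condition because $X$ is Hausdorff.

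One correction to your justification: the identity between the deductive closure and your countable union should not be attributed to the compactness theorem of propositional logic. Over ZF that theorem is equivalent to BPI. It fails in the Solovay model, which is where this proposition is applied: the ultrafilter theory of the ideal of finite subsets of $\omega$ is finitely satisfiable but has no satisfying assignment there. What you actually use is that proofs are finite, together with soundness and completeness for finitely many premises, all provable in ZF. Since deductive closure here is the syntactic notion, your decomposition is valid on that basis.
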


\begin{proof}
The local compactness of $X^*$ is obvious. For the complexity of the deductive closure, let $T\subseteq X^*$ be a $K_\gs$-theory. Express $X=\bigcup_nK_n$ and $T=\bigcup_nC_n$ as increasing unions of compact sets. For each number $n\in\gw$ the set $D_n$ of all proofs of length $n$ from $T$, which use only formulas of length $\leq n$, only atomic variables in the set $K_n$ and as for formulas in $T$ only use formulas in $C_n$ is compact as a subset of $((X^*)\cap (X\cup S)^{\leq n})^{\leq n}$ as can be immediately seen by the converging subsequence criterion. The deductive closure of $T$ is then the union of the projections of the sets in $D_n$ into their last coordinates. Projections of compact sets are compact, a key use of the complexity assumptions.
\end{proof}

\begin{theorem}
\label{ksigmacriterion}
Let $T$ be a deductively closed $K_\gs$-theory on a locally compact space $X$. Suppose that

\begin{enumerate}
\item[(*)] there are compact sets $U_n$ for $n\in\gw$ such that  $T=\bigcup_nU_n$ and for every $n\in\gw$ and all compact sets $K_0, K_1\subset X^*$, if for all $\phi_0\in K_0$ and $\phi_1\in K_1$ the implication $\phi_0\to\phi_1$ belongs to $U_n$, then there is a formula $\psi$ such that for all $\phi_0\in K_0$ and $\phi_1\in K_1$ the implications $\phi_0\to\psi$ and $\psi\to\phi_1$ belong to $T$.
\end{enumerate}

\noindent Then $T$ is balanced.
\end{theorem}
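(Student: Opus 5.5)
We verify Definition~\ref{maindefinition} directly. Fix sets of ordinals $x, y_0, y_1$ with $T\in\vd_x$ and $y_0\forkindep[x]y_1$, and formulas $\phi_0\in\vd_{xy_0}$, $\phi_1\in\vd_{xy_1}$ with $T\vdash\phi_0\to\phi_1$. Since $T$ is deductively closed, this means $\phi_0\to\phi_1\in T$. We may also enlarge $x$ so that the sequence $\langle U_n\colon n\in\gw\rangle$ is in $\vd_x$. The plan is to produce compact sets $K_0\ni\phi_0$ and $K_1\ni\phi_1$, coded in $\vd_x$, such that $\phi'_0\to\phi'_1\in U_n$ for all $\phi'_0\in K_0$ and $\phi'_1\in K_1$. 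Given such sets, (*) applied in $\vd_x$ yields a formula $\psi$, which can be taken in $\vd_x$ by choosing it canonically from $x$ and the codes of $K_0,K_1,n$. This $\psi$ is the required interpolant.

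The key step is a "compact rectangle" lemma in the geometric Solovay model axiomatization. Let $n$ be such that $\phi_0\to\phi_1\in U_n$. Consider the compact set
\[
C=\{\langle\chi_0,\chi_1\rangle\in X^*\times X^*\colon \chi_0\to\chi_1\in U_n\}.
\]
It is compact because the map $\langle\chi_0,\chi_1\rangle\mapsto(\chi_0\to\chi_1)$ is a homeomorphism onto a closed subset of $X^*$. We have $C\in\vd_x$, and the point $\langle\phi_0,\phi_1\rangle\in C$ has its coordinates in mutually independent extensions over $x$. We want a product of compact sets $K_0\times K_1\subseteq C$, coded in $\vd_x$, containing $\langle\phi_0,\phi_1\rangle$.

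For this, use the appendix facts describing $\vd_{xy}$-elements of Polish spaces as generic over $\vd_x$ for the corresponding collapse or Cohen-type posets, with independence giving mutual genericity. Suppose first that each $\phi_i$ lies in a nonempty compact set in $\vd_x$; local compactness of $X^*$ gives this, since we can take closures of basic open sets coded in $\vd_x$. Then argue as follows. The set $\{\chi_0\colon \langle\chi_0,\phi_1\rangle\in C\}$ is closed, and by mutual genericity (a forcing-theorem argument) there is a condition $p_0$ for $\phi_0$ such that every generic $\chi_0$ below $p_0$ satisfies $\langle\chi_0,\phi_1\rangle\in C$. Let $K_0$ be the closure of the set of all $\vd_x$-generic points below $p_0$, intersected with a compact neighborhood. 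Since $C$ is closed, $K_0\times\{\phi_1\}\subseteq C$, and $K_0\in\vd_x$. Repeating the argument for $\phi_1$ against the whole compact set $K_0$ gives $K_1$. The closedness of $C$ is what makes passing to closures harmless.

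The main obstacle is this rectangle step. Specifically, we must make precise that "all $\phi'_0$ in the closure of a generic-determined set" works uniformly, and that the resulting $K_i$ are compact rather than merely closed. Local compactness handles compactness, by intersecting with a compact neighborhood from $\vd_x$. If the forcing-theorem route is awkward, I would try the alternative of applying Fact~\ref{facticfact}(3)-style separation to the disjoint sets $\{\phi'_0\}$ and the projection of the complement of $C$. After that, the final step of choosing $\psi$ in $\vd_x$ is routine.
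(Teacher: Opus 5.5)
Your reduction matches the paper's. You fix $n$ with $\phi_0\to\phi_1\in U_n$, find compact $K_0\ni\phi_0$ and $K_1\ni\phi_1$ in $\vd_x$ with every implication from $K_0$ to $K_1$ in $U_n$, and apply (*). But the rectangle step, which is the whole content of the theorem, is not established.

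Your main route cites appendix facts that are not there. Nothing in the axiomatization says that a point $\phi_0\in\vd_{xy_0}$ is coded by a filter on a countable poset generic over $x$, still less over $xy_1$. Nor is $y_0\forkindep_x y_1$ ever turned into mutual genericity. Fact~\ref{factgen} only concerns filters already assumed generic, and its forcing theorem allows parameters from the base class only, whereas you need $\phi_1\in\vd_{xy_1}$ as a parameter. Even granting all this, you would still have to pass from ``every point generic over $xy_1$ below $p_0$ lies in the section'' to ``every point in the closure of the points generic over $x$ below $p_0$ lies in the section.'' That needs a further density argument, which is exactly the uniformity issue you leave open.

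The fallback is also misaimed. Fact~\ref{facticfact}(3) separates disjoint \emph{subsets of $\vd_x$}, and $\{\phi_0\}$ is not one. The projection of the complement of $C$ is the wrong set: it contains $\phi_0$ itself, since $\{\chi_1\colon\phi_0\to\chi_1\in U_n\}$ is compact and $X^*$ is not.

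The missing idea is to replace points by sets of basic open sets, since each basic open set of $X^*$ belongs to $\vd_x$.
\begin{enumerate}
\item Let $C_1=\{\chi\in X^*\colon \chi\to\phi_1\in U_n\}$, a closed set in $\vd_{xy_1}$ containing $\phi_0$.
\item With $O$ ranging over basic open sets, let $W_0=\{O\colon \phi_0\in O\}\in\vd_{xy_0}$ and $W_1=\{O\colon O\cap C_1=0\}\in\vd_{xy_1}$. These are disjoint subsets of $\vd_x$ because $\phi_0\in C_1$.
\item Fact~\ref{facticfact}(3), with symmetry, gives $B\in\vd_x$ such that $W_1\subseteq B$ and $B\cap W_0=0$.
\item Then $K_0=X^*\setminus\bigcup B$ is closed, lies in $\vd_x$, and contains $\phi_0$. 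It is contained in $C_1$ because $C_1$ is closed. This is the paper's key claim.
\item No second separation is needed. The set $K_1=\{\chi\colon \forall\chi_0\in K_0\ \chi_0\to\chi\in U_n\}$ is closed, lies in $\vd_x$, and contains $\phi_1$. After shrinking to compact sets, $K_0\times K_1\subseteq C$.
\end{enumerate}

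Finally, ``choosing $\psi$ canonically'' presupposes that some witness already lies in $\hvd_x$, since a nonempty $\vd_x$ set need not have a $\vd_x$ element. That witness comes from Shoenfield absoluteness: the existence of $\psi$ is a $\mathbf{\Sigma}^1_2$ statement in parameters available in $\hvd_x$.
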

	
\begin{proof}
Suppose that $z, y_0, y_1$ are sets of ordinals such that $X, T, \langle U_n\colon n\in\gw\rangle\in\vd_z$ and $y_0\forkindep_zy_1$, and suppose that $\phi_0\in\vd_{zy_0}$ and $\phi_1\in\vd_{zy_1}$ are formulas in $X^*$ such that the implication $\phi_0\to\phi_1$ belongs to $T$. There must be a number $n$ such that the implication belongs to $U_n$.

\begin{claim}
There is a closed set $K_0\subseteq X^*$ in $\vd_z$ such that $\phi_0\in K_0$ and for every $\psi_0\in K_0$, the implication $\psi_0\to\phi_1$ belongs to $U_n$.
\end{claim}

\begin{proof}
Let $W_0$ be the set of all basic open subsets of $X^*$ which contain $\phi_0$, and let $W_1$ be the set of all basic open subsets of $X^*$ which contain no formula $\phi'_0$ such that $\phi'_0\to\phi_1\in U_n$. These are obviously disjoint subsets of $\vd_z$, $W_0\in\vd_{zy_0}$ and $W_1\in\vd_{zy_1}$, so by Fact~\ref{facticfact}(3) there must be a set $B\in\vd_z$ consisting of basic open subsets of $X^*$ such that $A_0\cap B=0$ and $A_1\subseteq B$. Write $K_0=X\setminus \bigcup B\in\vd_z$. As $B$ is disjoint from $A_0$, it is clear that $\phi_0\in K_0$. As $A_1\subseteq B$ and the set $U_n$ is closed, it follows that for every formula $\psi_0\in K_0$, $\psi_0\to\phi_1\in U_n$ as desired.
\end{proof}

\noindent A symmetric argument yields

\begin{claim}
There is a closed set $K_1\subseteq X^*$ in $\vd_z$ such that $\phi_1\in K_1$ and for every $\psi_1\in K_1$, the implication $\phi_0\to\psi_1$ belongs to $U_n$.
\end{claim}

\noindent Use the locally compact assumption to shrink the sets $K_0, K_1$ to compact. Further shrinking $K_0$ to the compact set $\{\psi_0\in K_0\colon\forall \psi_1\in K_1\ \psi_0\to\psi_1\in U_n\}$  and $K_1$ to the compact set $\{\psi_1\in K_1\colon\forall \psi_0\in K_0\ \psi_0\to\psi_1\in U_n\}$, we may achieve the situation where all implications from an element of $K_0$ to an element of $K_1$ belong to the set $U_n$.

Now, use the assumption (*) to conclude that there is a formula $\psi$ such that for all $\psi_0\in K_0$ and all $\psi_1\in K_1$, the implications $\psi_0\to\psi$ and $\psi\to\psi_1$ belong to $T$. This is a ${\mathbf\Sigma}^1_2$-statement; by Shoenfield's absoluteness, such a formula $\psi$ must exist in $\vd_z$. The balance of the theory $T$ follows.
\end{proof}

\noindent It may seem that the criterion in Theorem~\ref{ksigmacriterion} is too technical or uncommon to be useful. However, there is a canonical procedure for extending a given consistent $K_\gs$-theory on a locally compact space to one which satisfies (*). This is described in the following theorem.

\begin{theorem}
\label{completiontheorem}
Let $X$ be a locally compact Polish space and $T\subset X^*$ be a consistent $K_\gs$ theory on $X$. There is a locally compact Polish space $Y\supset X$ and a consistent $K_\gs$ theory $S\supset T$ on $Y$ satisfying (*) of Theorem~\ref{ksigmacriterion}. 
\end{theorem}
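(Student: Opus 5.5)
The plan is to add, for every pair of compact sets of formulas that needs one, a brand new propositional variable serving as its interpolant. This is repeated $\gw$ times, so that formulas involving the new variables also get interpolants. Everything is organized so that local compactness and the $K_\gs$ property survive each step.

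\emph{One step.} Let $Y_k$ be a locally compact Polish space and $T_k\subset Y_k^*$ a consistent, deductively closed $K_\gs$ theory. By the Proposition above, the deductive closure of a $K_\gs$ theory is $K_\gs$, so closure is harmless. Write $T_k=\bigcup_n U^k_n$ with $U^k_n$ compact and increasing. Let $\mathcal{K}$ be the hyperspace of compact subsets of $Y_k^*$ with the Vietoris topology; it is locally compact Polish because $Y_k^*$ is. For each $n$ let
\[
P^k_n=\{\langle K_0,K_1\rangle\in\mathcal{K}^2\colon \forall\phi_0\in K_0\,\forall\phi_1\in K_1\ (\phi_0\to\phi_1)\in U^k_n\}.
\]
This set is closed, since $U^k_n$ is closed and the map $\langle\phi_0,\phi_1\rangle\mapsto(\phi_0\to\phi_1)$ is continuous. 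Define
\[
Z_{k+1}=\bigsqcup_n \{n\}\times P^k_n ,
\]
which is locally compact Polish as a countable disjoint sum of closed subsets of a locally compact space. Set $Y_{k+1}=Y_k\sqcup Z_{k+1}$. The theory $T_{k+1}$ is the deductive closure of $T_k$ together with the sentences $\phi_0\to p$ and $p\to\phi_1$, for every $p=\langle n,K_0,K_1\rangle\in Z_{k+1}$, $\phi_0\in K_0$ and $\phi_1\in K_1$.

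The new axiom set is $K_\gs$. It is the continuous image of the $K_\gs$ set
\[
\{\langle p,\phi_0\rangle\colon p\in Z_{k+1},\ \phi_0\in K_0\}
\]
(and its analogue for $\phi_1$), intersected with compact pieces of the form $P^k_n\cap(\text{compact})$.

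Consistency comes from an explicit conservative interpretation. Given any completion $M$ of $T_k$, declare $p=\langle n,K_0,K_1\rangle$ true iff some $\phi_0\in K_0$ is true in $M$. Then $\phi_0\to p$ holds trivially. If $p$ is true, witnessed by $\phi_0$, then for each $\phi_1\in K_1$ we have $(\phi_0\to\phi_1)\in T_k$, so $\phi_1$ is true and $p\to\phi_1$ holds. Thus $T_{k+1}$ is consistent, and in fact conservative over $T_k$.

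\emph{The limit.} Put $Y_0=X$, $T_0=T$ (deductively closed), $Y=X\sqcup\bigsqcup_{k\geq1}Z_k$ and $S=\bigcup_kT_k$.

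1. $Y$ is a countable disjoint sum of locally compact Polish spaces, hence locally compact Polish.
2. $S$ is a countable union of $K_\gs$ sets, hence $K_\gs$.
3. $S$ is consistent by compactness, since each $T_k$ is consistent.
4. For the witnesses to (*), enumerate all the compact sets $U^k_m$ (over $k,m\in\gw$) as a single sequence $U_n$.

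Now let $K_0,K_1\subset Y^*$ be compact with every implication $\phi_0\to\phi_1$ (for $\phi_0\in K_0$, $\phi_1\in K_1$) in $U_n=U^k_m$. These implications then lie in $Y_k^*$, so $K_0,K_1\subset Y_k^*$. Hence $\langle K_0,K_1\rangle\in P^k_m$, and the variable $\psi=\langle m,K_0,K_1\rangle\in Z_{k+1}$ is an interpolant, by the axioms of $T_{k+1}\subseteq S$.

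\emph{Main obstacle.} I expect the delicate point to be the topological bookkeeping rather than the logic. One has to check carefully that the sets $P^k_n$ are closed in the hyperspace. One also has to check that the axiom sets of each $T_{k+1}$ really are $K_\gs$ rather than merely analytic. For this I would verify that $\{\langle K,\phi\rangle\colon\phi\in K\}$ is closed in $\mathcal{K}\times Y_k^*$, then write everything as countable unions of compact sets, using that compact subsets of the Vietoris hyperspace have compact union. A second point to check is that compact subsets of $Y^*$ involve formulas of bounded length and meet only finitely many summands. This is what guarantees that the interpolant is produced at some finite stage, which the enumeration of the $U_n$ above already handles.
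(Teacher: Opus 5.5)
Your proposal is correct and is essentially the paper's own construction. You adjoin new atoms indexed by the closed set of pairs $\langle K_0,K_1\rangle$ of compact sets all of whose implications lie in a fixed compact piece of the current theory, add the axioms $\phi_0\to p$ and $p\to\phi_1$, get consistency by declaring $p$ true iff some $\phi_0\in K_0$ is true, iterate $\omega$ times and take the union. The only difference is bookkeeping: you interpolate for all pieces $U^k_m$ of $T_k$ at once (via a disjoint sum over $m$) and then enumerate every $U^k_m$, whereas the paper treats one piece $U_{0n}$ per stage and arranges $U_{0n}\supseteq U_{ni}$ for $i<n$ so that these pieces exhaust the limit theory.

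One caveat, which applies equally to the paper's truth-assignment argument: the result is used under ZF+DC, where a completion $M$ of $T_k$ need not exist. The consistency step should therefore be run on finite fragments of $T_k$ together with the finitely many relevant implications $\phi_0\to\phi_1$.
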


\begin{proof}
By recursion on $n$ build locally compact spaces $X_n$ and deductively closed $K_\gs$ consistent theories $T_n\subset X_n^*$ expressed as increasing unions $T_n=\bigcup_mU_{mn}$ of compact sets such that

\begin{itemize}
\item $X_0=X$ and $T_0=T$;
\item $X_n\subseteq X_{n+1}$ is a clopen set, $T_n\subseteq T_{n+1}$;
\item $U_{0n}$ contains $U_{ni}$ for all $i\in n$;
\item whenever $K_0, K_1\subset X_n^*$ are compact sets such that for any formulas $\phi_0\in K_0$ and $\phi_1\in K_1$ the implication $\phi_0\to\phi_1$ belongs to $U_{0n}$, then there is a formula $\psi\in X_{n+1}^*$ such that for every $\phi_0\in K_0$ and every $\phi_1\in K_1$ the implications $\phi_0\to\psi$ and $\psi\to\phi_1$ belong to $T_{n+1}$.
\end{itemize}

\noindent Once this is done, consider the space $X_\gw=\bigcup_nX_n$, the theory $T_\gw=\bigcup_nT_n$, and the sets $U_n=U_{0n}$ for $n\in\gw$. It is clear that $X_\gw$ is a locally compact Polish space, the theory $T_\gw$ is consistent and deductively closed, and $T_\gw$ is an increasing union $\bigcup_nU_n$ of compact sets. In addition, if $n\in\gw$ is a number and $K_0, K_1\subset X_\gw^*$ are compact sets such that for any formulas $\phi_0\in K_0$ and $\phi_1\in K_1$ the implication $\phi_0\to\phi_1$ belongs to $U_{0n}$, then $K_0, K_1\subset X_n^*$ holds, and the last item of the recursion demands shows that there is an interpolant as required in (*).

To complete the proof of the theorem, we must show how to perform the recursion step. Suppose that $X_n$, $T_n$, and $U_{0n}$ have been constructed. Consider the locally compact space $Z_n=K(X^*_n)^2$ and its subset $Y_n=\{\langle K_0, K_1\rangle\colon \forall\phi_0\in K_0\forall \phi_1\in K_1\ \phi_0\to\phi_1\in U_{0n}\}$. An inspection reveals that the set $Y_n\subseteq Z_n$ is closed. Now, let $X_{n+1}$ be a disjoint union of $X_n$ and $Y_n$, and consider the theory $S_n$ using the elements of $Y_n$ as new atomic variables, containing the statements $\phi_0\to\langle K_0, K_1\rangle$ and $\langle K_0, K_1\rangle\to\phi_1$ for all $\phi_0\in K_0$ and $\phi_1\in K_1$ and $\langle K_0, K_1\rangle\in U_{0n}$. The theory $T_n\cup S_n$ is consistent: it is even conservative over $T_n$ in that every truth assignment on $X_n$ making all formulas in $T_n$ true can be extended to a truth assignment making every atomic formula $\langle K_0, K_1\rangle\in Y_n$ true just in case at least one formula in $K_0$ is true, which in turn makes all formulas of $S_n$ true. An inspection reveals that $S_n\subset X^*_{n+1}$ is closed, so $T_n\cup S_n$ is $K_\gs$, and so is its deductive closure $T_{n+1}$. Finally, express $T_{n+1}$ as an increasing union $\bigcup_mU_{mn+1}$ in such a way that $U_{0n+1}$ contains $U_{n+1,i}$ for all $i\in n+1$. This completes the recursion step and the proof.
\end{proof}

\begin{example}
\label{fsigmaexample}
	Let $I$ be an $F_\gs$-ideal on $\gw$. There is a Boolean balanced forcing adding an ultrafilter on $\gw$ disjoint from $I$.
\end{example}

\begin{proof}
	Let $X$ be the space $\power(\gw)$ with the usual compact topology. Each element $x\in X$ is viewed as a propositional variable. Consider the theory $T$ consisting of all statements $x\lor (\gw\setminus x)$, $x_1\to x_0$ if $x_1\subseteq x_0$, $x_0\land x_1\to (x_0\cap x_1)$, and $\lnot x$ whenever $x\in I$. This is clearly a $K_\gs$-theory by the complexity assumption on $I$. Theorem~\ref{completiontheorem} completes the proof.
\end{proof}

\begin{example}
	Let $G$ be a $K_\gs$ graph on a locally compact Polish space $X$ and $n\in\gw$ be a number such that every finite subgraph of $G$ has chromatic number $\leq n$. Then there is a Boolean balanced poset adding a coloring of $G$ with at most $n$ colors.
\end{example}

\begin{proof}
	The coloring theory of the graph is $K_\gs$. Theorem~\ref{completiontheorem} concludes the proof.
\end{proof}

\noindent For linearization theories we also have a practical balance criterion.

\begin{theorem}
\label{linearizationtheorem}
Let $\langle A, \leq\rangle$ be a partial order. Suppose that 

\begin{enumerate}
\item[(*)] there is a function $F\colon A\to\power(A)$ whose range consists of well-orderable sets, such that for every $a_0\leq a_1\in A$ there is $b\in F(a_0)\cap F(a_1)$ with $a_0\leq b\leq a_1$.
\end{enumerate}

\noindent Then the linearization theory of $A$ is balanced.
\end{theorem}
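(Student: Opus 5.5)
The plan is to reduce the provability of $\phi_0\to\phi_1$ to a finite combinatorial statement about linear orders of finite subsets of $A$. I will then use (*) to route every comparison between the ``$y_0$-side'' and the ``$y_1$-side'' through finitely many elements of $\vd_x$. The interpolant will be a disjunction over linear orders of this finite set of intermediaries.

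Fix $x$ with $A,\leq,F\in\vd_x$ (enlarge $x$ if needed), sets $y_0\forkindep[x]y_1$, and formulas $\phi_0\in\vd_{xy_0}$, $\phi_1\in\vd_{xy_1}$ with $T\vdash\phi_0\to\phi_1$.

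\textbf{Step 1: the relevant finite sets.} Let $E_0\subset A$ be the finite set of elements occurring in atoms of $\phi_0$, and define $E_1$ similarly for $\phi_1$. As finite sets coded in $\vd_{xy_0}$ and $\vd_{xy_1}$, we get $E_0\subset\vd_{xy_0}$ and $E_1\subset\vd_{xy_1}$ by Fact~\ref{facticfact}(1). Since proofs are finite and $T$ restricted to finitely many variables only says ``the variables describe a linear order extending $\leq$'', $T\vdash\phi_0\to\phi_1$ is equivalent to the following: every linear order of $E_0\cup E_1\cup D$ extending $\leq$ satisfies $\phi_0\to\phi_1$, where $D$ is any finite set closed enough to capture the transitivity constraints. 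I will check carefully that taking $D=C$ from Step 2 suffices.

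\textbf{Step 2: the intermediaries.} For each pair $a_0\in E_0$, $a_1\in E_1$ with $a_0\leq a_1$ or $a_1\leq a_0$, use (*) to pick $b\in F(a_0)\cap F(a_1)$ between them. The set $F(a_0)\in\vd_{xy_0}$ is well-orderable, so $F(a_0)\subset\vd_{xy_0}$ by Fact~\ref{facticfact}(1); likewise $F(a_1)\subset\vd_{xy_1}$. Hence $b\in\vd_{xy_0}\cap\vd_{xy_1}=\vd_x$ by Fact~\ref{facticfact}(2). Let $C$ be the finite set of all such $b$; each element of $C$ lies in $\vd_x$, so $C\in\vd_x$, and so does every subset of $C^2$ and every set of linear orders of $C$.

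\textbf{Step 3: amalgamation.} I claim that if $L_0$ is a linear order of $E_0\cup C$ extending $\leq$, $L_1$ is a linear order of $C\cup E_1$ extending $\leq$, and $L_0$ and $L_1$ agree on $C$, then there is a linear order of $E_0\cup C\cup E_1$ extending $\leq$, $L_0$ and $L_1$. Take the transitive closure of $L_0\cup L_1\cup{\leq}$ and show it is acyclic. Every $\leq$-comparison between an element of $E_0$ and an element of $E_1$ factors through an element of $C$ lying between them, so every cycle can be shortened to one living entirely on one side, which is impossible. Any partial order on a finite set then has a linearization. This is the step I expect to be the main obstacle: one must handle cycles that pass through $C$ several times, and elements of $E_0\cap E_1$.

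\textbf{Step 4: the interpolant.} Let $\Lambda$ be the set of linear orders $L$ of $C$ extending $\leq$ such that every linear order of $C\cup E_1$ extending $\leq$ and $L$ satisfies $\phi_1$. Let $\psi$ be the disjunction over $L\in\Lambda$ of the conjunction of the atoms and negated atoms describing $L$. Then $\psi\in\vd_x$ by Step 2. The implication $T\vdash\psi\to\phi_1$ is immediate from the definition of $\Lambda$. For $T\vdash\phi_0\to\psi$, suppose a linearization satisfies $\phi_0$, and let $L$ be its restriction to $C$. If $L\notin\Lambda$, there is some $L_1$ on $C\cup E_1$ extending $L$ with $\lnot\phi_1$. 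Step 3 then amalgamates it with the restriction $L_0$ to $E_0\cup C$ into a linear order satisfying $\phi_0\land\lnot\phi_1$. Via Step 1, this contradicts $T\vdash\phi_0\to\phi_1$.
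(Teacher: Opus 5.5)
Apart from one point, your argument is the paper's proof. It has the same three ingredients: intermediaries $b\in F(a_0)\cap F(a_1)\subseteq\vd_{xy_0}\cap\vd_{xy_1}=\vd_x$, amalgamation of finite linearizations by rerouting each cross comparison through such a $b$, and an interpolant in the finite shared language. You take the weakest interpolant (the disjunction of the diagrams of those orders on $C$ all of whose extensions to $C\cup E_1$ satisfy $\phi_1$). The paper takes the strongest one (the conjunction of the $T$-consequences of $\phi_0$ in the language $(B_0\cap B_1)^2$). Either works.

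The worries you flag are harmless. In Step 1 any finite $D$ will do, because by transitivity of $\leq$ a linearization of a finite subset of $A$ extends to every larger finite subset. In Step 3, (*) applied to the pair $a\leq a$ forces $b=a$. So $E_0\cap E_1\subseteq C$, and the domains of $L_0$ and $L_1$ meet exactly in $C$. After rerouting, every maximal block of $L_1$-steps in a cycle starts and ends in $C$, where $L_1$ agrees with $L_0$, so the cycle becomes an $L_0$-cycle.

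The step that fails is ``enlarge $x$ if needed''. Balance quantifies over every $x$ with $T\in\vd_x$ and asks for $\psi\in\vd_x$. Passing to a larger $x'$ with $F\in\vd_{x'}$ loses the hypothesis, since independence over $x$ is not independence over $x'$. It also loses the conclusion, since an interpolant in $\vd_{x'}$ need not lie in $\vd_x$. The paper's proof makes exactly the same tacit restriction (it starts from $x$ with $F,A\in\vd_x$). So both arguments give interpolation only for those $x$ with $F\in\vd_x$. That is the full statement when $F$ can be found in $\vd_x$ whenever $T\in\vd_x$ (e.g.\ when $F$ is definable from $\langle A,\leq\rangle$, as $F(a)=\{a\}$ for the trivial order), but not in general.

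Here is a counterexample. Let $A$ be the disjoint union of copies $X_0,X_1,D$ of $\mathbb{R}$ and a copy $N$ of the (co-countable) set of non-ordinal-definable reals. Order it so that every element of $X_0$ lies below every element of $N\cup X_1$, every element of $N$ lies below every element of $X_1$, and $N$ carries the usual order of the reals, with no other comparabilities. Then $T\in\vd$. Fix $r\in N$ and let $Q\subseteq N$ be the copy of $r+\mathbb{Q}$; then $F(a)=\{a\}\cup Q$ witnesses (*) by density of $Q$. Now let $a_0\in X_0$ and $a_1\in X_1$ be copies of mutually Cohen-generic reals, and let $c\in D\cap\vd$; then $T\vdash\langle c,a_0\rangle\to\langle c,a_1\rangle$. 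Take any $\psi\in\vd$, and let $G$ be the finite set of elements it mentions; $G$ is disjoint from $N\cup\{a_0,a_1\}$. Hence $G\cup\{a_0,a_1,c\}$ has two linearizations that agree on $G$, one placing $c$ below $a_0$ and the other placing $a_1$ below $c$. So if $T\vdash\langle c,a_0\rangle\to\psi$, then $\psi\wedge\lnot\langle c,a_1\rangle$ is consistent with $T$. Thus no $\psi\in\vd$ is an interpolant, and balance fails at $x=\emptyset$.
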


\begin{proof}
Let $T$ denote the linearization theory. Note that a formula $\phi$ in the language of $T$ is consistent with $T$ if there is a finite set $B\subset A$ which contains all elements of $A$ mentioned in $\phi$ and a linearization $\preceq$ of $\leq\restriction B$ which satisfies $\phi$. To see this, observe that every linearization on a finite subset of $A$ can be extended to a linearization on any larger finite subset of $A$.

Towards the proof of balance of $T$, suppose that (*) holds. Suppose that $x, y_0, y_1$ are sets of ordinals such that $F, A\in\vd_x$ and $y_0\forkindep[x]y_1$. 
To begin, observe that for every $a_0\in A\cap\vd_{xy_0}$ and $a_1\in A\cap\vd_{xy_1}$ such that $a_0\leq a_1$ (or $a_0\geq a_1$) there is $b\in\vd_x$ such that $a_0\leq b\leq a_1$ (or $a_0\geq b\geq a_1$). To see this, use the properties of the function $F$ to find such $b$ in the intersection $F(a_0)\cap F(a_1)$. Now, by Fact~\ref{facticfact}(1), $F(a_0)\subseteq \vd_{xy_0}$ and $F(a_1)\subseteq\vd_{xy_1}$ both hold, as the range of $F$ consists of well-orderable sets. In addition, Fact~\ref{facticfact}(2) shows that $F(a_0)\cap F(a_1)\subseteq\vd_x$ must hold, so $b\in\vd_x$ holds as desired.

Now, suppose that $\phi_0\in\vd_{xy_0}$ and $\phi_1\in\vd_{xy_1}$ are formulas in the language of $T$. Let $B_0\in\vd_{xy_0}$ and $B_1\in\vd_{xy_1}$ be finite sets such that $B_0$ contains all elements of $A$ mentioned in $\phi_0$, $B_1$ contains all elements of $A$ mentioned in $\phi_1$, $B_0\cap B_1=B_0\cap\vd_x=B_1\cap\vd_x$, and for any two elements $a_0\in B_0$ and $a_1\in B_1$ such that $a_0\leq a_1$ there is an interpolant $b\in B_0\cap B_1$ such that $a_0\leq b\leq a_1$. Such finite sets can be immediately found by the previous paragraph. We will show that either there is an interpolant $\psi$ in the language $(B_0\cap B_1)^2$ between $\phi_0$ and $\phi_1$, or $\phi_0\land\lnot\phi_1$ is consistent with $T$. This will prove the balance of $T$ as all formulas in the language $(B_0\cap B_1)^2$ belong to $\vd_x$.

Suppose that no interpolant $\psi$ as above exists. Let $S$ be the set of all $T$-consequences of $\phi$ in the language $(B_0\cap B_1)^2$. There are only finitely many formulas in this language up to logical equivalence. Their conjunction does not imply $\lnot\phi$, so there is a linear order $\preceq_1$ on $B_1$ extending $\leq$ which satisfies $S$ and $\lnot\phi_1$. By the definition of $S$, there is a linear order $\preceq_0$ on $B_0$ extending $\leq\cup (\preceq_1\restriction B_0\cap B_1)$ which satisfies $\phi_0$. The key point now is that the relation $\preceq_0\cup\preceq_1\cup\leq\restriction (B_0\cup B_1)$ does not contain any non-trivial cycles. To see this, observe that any such cycle would have to contain elements in both $B_0\setminus B_1$ and $B_1\setminus B_0$, but the number of such elements could be always decreased by using interpolants in $B_0\cap B_1$. Any acyclic, asymmetric relation on a finite set can be extended to a linear order. Choose any linear order $\preceq$ extending  $\preceq_0\cup\preceq_1\cup\leq\restriction (B_0\cup B_1)$ and note that it satisfies $\phi_0\land\lnot\phi_1$, proving that the conjunction is consistent with $T$.
\end{proof}

\begin{example}
Let $A$ be any set and $\leq$ be the trivial partial order on it. The linearization theory of $\leq$ is balanced.
\end{example}

\begin{proof}
To verify the interpolation criterion in Theorem~\ref{linearizationtheorem}, just let $F(a)=\{a\}$ for every element $a\in A$.
\end{proof}

\noindent As a final positive result in this section, we show that Boolean balanced forcings are closed under a type of product operation.

\begin{theorem}
	\label{producttheorem}
	Let $\{T_i\colon i\in I\}$ be a collection of balanced theories in pairwise disjoint languages $\mathcal{L}_i$. Then $T=\bigcup_iT_i$ is balanced.
\end{theorem}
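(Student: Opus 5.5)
The plan is to reduce balance of $T=\bigcup_iT_i$ to balance of finitely many of the $T_i$ by putting formulas into normal forms that separate the languages. Throughout, I read the statement as assuming that the indexed family $\langle T_i\colon i\in I\rangle$ is in $\vd_x$ whenever $T$ is. The decomposition of $\mathcal{L}_T$ into the $\mathcal{L}_i$ is not obviously recoverable from $T$ alone, so I expect this to be needed. The final proof should either state it or show that one may enlarge $x$ harmlessly.

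So fix $x, y_0, y_1$ with $\langle T_i\colon i\in I\rangle\in\vd_x$ and $y_0\forkindep[x]y_1$, and formulas $\phi_0\in\vd_{xy_0}$, $\phi_1\in\vd_{xy_1}$ with $T\vdash\phi_0\to\phi_1$.

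\emph{Normal forms.} Let $a_0\subset I$ be the finite set of indices whose languages are mentioned in $\phi_0$, and $a_1$ the one for $\phi_1$; then $a_0\in\vd_{xy_0}$ and $a_1\in\vd_{xy_1}$. Since $a_0,a_1$ are finite, hence well-orderable, Fact~\ref{facticfact}(1),(2) gives $a_0\cap a_1\subset\vd_x$. Using truth assignments to the finitely many variables of $\phi_0$ and splitting each assignment by language, write $\phi_0$ as equivalent to $\bigvee_k\bigwedge_{i\in a_0}\alpha_{k,i}$ with $\alpha_{k,i}$ in $\mathcal{L}_i$. Dually, write $\phi_1$ as equivalent to $\bigwedge_l\bigvee_{i\in a_1}\beta_{l,i}$. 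These decompositions are canonical from $\phi_0$ and $\phi_1$, so they lie in $\vd_{xy_0}$ and $\vd_{xy_1}$ respectively. Then $T\vdash\phi_0\to\phi_1$ is equivalent to: for every pair $k,l$, $T\vdash\bigwedge_i\alpha_{k,i}\to\bigvee_i\beta_{l,i}$.

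\emph{Splitting lemma.} This is the step I expect to carry the real content. Fix $k,l$ with $T\vdash\bigwedge_i\alpha_{k,i}\to\bigvee_i\beta_{l,i}$. I claim one of the following holds:
\begin{enumerate}
\item there is $i\in a_0$ with $T_i\vdash\lnot\alpha_{k,i}$;
\item there is $i\in a_1$ with $T_i\vdash\beta_{l,i}$;
\item there is $i\in a_0\cap a_1$ with $T_i\vdash\alpha_{k,i}\to\beta_{l,i}$.
\end{enumerate}
Suppose none holds. Then for each relevant index $i$ choose a truth assignment on $\mathcal{L}_i$ satisfying $T_i$, together with $\alpha_{k,i}$ (if $i\in a_0$) and $\lnot\beta_{l,i}$ (if $i\in a_1$). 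Finitely many indices are involved, so this needs no choice. For every other index $j$, the theory $T_j$ is consistent, because $T$ is. Since the languages are pairwise disjoint, these assignments glue to an assignment satisfying $T\cup\{\bigwedge_i\alpha_{k,i},\lnot\bigvee_i\beta_{l,i}\}$. This contradicts the hypothesis. The gluing uses only propositional compactness applied to each $T_j$ separately; since deductions are finite, it suffices to work with the finitely many indices involved plus a finite fragment of the rest.

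\emph{Assembling the interpolant.} For each pair $k,l$ define $\gamma_{k,l}$ according to which case of the lemma holds:
\begin{itemize}
\item[(1)] let $\gamma_{k,l}$ be $\bot$;
\item[(2)] let $\gamma_{k,l}$ be $\top$;
\item[(3)] the index $i$ lies in $\vd_x$, so $T_i\in\vd_x$ and balance of $T_i$ yields an interpolant $\gamma_{k,l}\in\vd_x$.
\end{itemize}
In each case $T\vdash\bigwedge_i\alpha_{k,i}\to\gamma_{k,l}$ and $T\vdash\gamma_{k,l}\to\bigvee_i\beta_{l,i}$. Set $\psi=\bigvee_k\bigwedge_l\gamma_{k,l}$. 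This is a finite combination of elements of $\vd_x$, and $\vd_x$ is closed under finite tuples because one concatenates the finitely many defining ordinals. Hence $\psi\in\vd_x$; no uniform choice of the $\gamma_{k,l}$ is needed, since we only need existence of some $\psi\in\vd_x$. Finally, $T\vdash\phi_0\to\psi$ because each disjunct $\bigwedge_i\alpha_{k,i}$ implies every $\gamma_{k,l}$. Likewise $T\vdash\psi\to\phi_1$ because each $\bigwedge_l\gamma_{k,l}$ implies every conjunct $\bigvee_i\beta_{l,i}$ of $\phi_1$.
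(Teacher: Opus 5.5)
The proof of your splitting lemma has a genuine gap. You ``choose a truth assignment on $\mathcal{L}_i$ satisfying $T_i$'' together with $\alpha_{k,i}$ and/or $\lnot\beta_{l,i}$, and say this needs no choice because only finitely many indices are involved. The problem is not the number of choices but whether even one such assignment exists. The claim that every consistent propositional theory has a satisfying assignment is the completeness/compactness theorem for propositional logic. Over ZF this is equivalent to BPI, and it fails under the axiomatization this paper works in. For instance, the ultrafilter theory of the ideal of finite sets on $\gw$ is consistent, and even balanced (Theorem~\ref{welltheorem}). Yet it has no satisfying assignment, since such an assignment would be a nonprincipal ultrafilter on $\gw$, a set of reals without the Baire property. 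So your closing remark that the gluing ``uses only propositional compactness applied to each $T_j$ separately'' appeals to exactly the principle that is unavailable. Adding such completions is what $P_T$ is for.

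The repair is to apply your finiteness remark to every index, including those in $a_0\cup a_1$. Take a finite $T'\subseteq T$ that proves $\bigwedge_{i\in a_0}\alpha_{k,i}\to\bigvee_{i\in a_1}\beta_{l,i}$, and a finite $b\supseteq a_0\cup a_1$ with $T'\subseteq\bigcup_{i\in b}T_i$. Suppose (1)--(3) all fail. For each $i\in b$, let the test set be $T'\cap T_i$, augmented by $\alpha_{k,i}$ if $i\in a_0$ and by $\lnot\beta_{l,i}$ if $i\in a_1$. This finite set is contained in a consistent set, so by truth tables some assignment to its finitely many variables satisfies it. By disjointness of the languages, these finitely many assignments combine into one satisfying $T'\cup\{\bigwedge_i\alpha_{k,i},\lnot\bigvee_i\beta_{l,i}\}$, which contradicts soundness.

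With this change your proof is correct, and it is organized differently from the paper's. The paper keeps $\phi_1$ in disjunctive form, interpolates $\phi_0^i\to\bigvee_{m\in c}\phi_1^{im}$ for every $c\subseteq n$, and argues through partitions of $n$. Your DNF/CNF split instead reduces everything to a one-index trichotomy per pair $(k,l)$. Your cases (1) and (2) also cover the degenerate situation where $\phi_0$ is refuted by some $T_i$ with $i\in a_0\setminus a_1$; the paper's construction $\psi=\bigwedge_{i\in a_0\cap a_1}\psi_i$ tacitly excludes this case.

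Finally, taking $\langle T_i\colon i\in I\rangle\in\vd_x$ as a hypothesis is exactly what the paper does, so just state it. Enlarging $x$ is not harmless: independence over $x$ does not give independence over a larger parameter, and the interpolant must lie in $\vd_x$.
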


\noindent Note that the poset associated with $T$ adds completions to all the theories $T_i$ simultaneously.

\begin{proof}
	As a preliminary remark, an application of the disjunctive normal form theorem shows that each formula $\phi$ in the language $\mathcal{L}=\bigcup_i\mathcal{L}_i$ is (provably equivalent to) a disjunction of conjunctions such that each formula in the conjunctions already belongs to one of the languages $\mathcal{L}_i$. 
	
	Suppose that $x, y_0, y_1$ are sets of ordinals such that $\langle T_i\colon i\in I\rangle\in\vd_x$ and $y_0\forkindep_xy_1$. Suppose that $\phi_0\in\vd_{xy_0}$ and $\phi_1\in\vd_{xy_1}$ are such that $T\vdash\phi_0\to\phi_1$. It is sufficient to treat the case $\phi_0=\bigwedge_{i\in a_0}\phi_0^i$ where $a_0\subset I$ is a finite set and each $\phi_0^i\in\mathcal{L}_i$. Suppose that $\phi_1=\bigvee_{m\in n}\bigwedge_{i\in a_1}\phi_1^{im}$, where $a_1\subset I$ is a finite set and $\phi_1^{im}\in\mathcal{L}_i$. Note that $a_0\cap a_1\subset\vd_x$ by Fact~\ref{facticfact}(2). For each $i\in a_0\cap a_1$ and every set $c\subseteq n$, if $T_i\vdash\phi_0^i\to\bigvee_{m\in c}\phi_1^{im}$, then use the balance of the theory $T_i$ to find an interpolant $\psi_{ic}\in\vd_x\cap\mathcal{L}_i$. Let $\psi_i=\bigwedge_c\psi_{ic}$ and $\psi=\bigwedge_{i\in a_0\cap a_1}\psi^i$. We claim that $\psi$ is the required interpolant.
	
	To see this, it is clear that $T\vdash\phi_0\to\psi$. To show that $T\vdash\psi\to\phi_1$, suppose towards a contradiction that this fails. Then, there must be a partition $\{c_i\colon i\in a_0\}$ of $n$ such that for each $i\in a_1$, if $i\in a_0$ then $T_i\cup\{\psi_i\}$ is consistent with $\bigwedge_{m\in c_i}\lnot\phi_1^{im}$ and if $i\notin a_0$ then $T_i$ is consistent with $\bigwedge_{m\in c_i}\lnot\phi_1^{im}$.
	By the choice of $\psi_i$ the former option implies that even $\phi_0^i$ is consistent with $\bigwedge_{m\in c_i}\lnot\phi_1^{im}$ for each $i\in a_0\cap a_1$. This in turn implies that $T\cup\phi_0$ is consistent with $\lnot\phi_1$, a contradiction.
\end{proof}

\begin{example}
There is a Boolean balanced poset adding a system $\langle U_\ga\colon \ga\in\gw_1$ limit$\rangle$ of ultrafilters on $\ga$ containing no bounded subsets of $\ga$. To see this, for each countable limit ordinal $\ga$ let $T_\ga$ be the ultrafilter theory on $\power(\ga)$ modulo the bounded sets. This is balanced by Theorem~\ref{welltheorem}.  Let $T$ be the union of all $S_\ga$ for $\ga$ a countable limit ordinal. Then the associated poset adds a system of ultrafilters, none containing a bounded set.
\end{example}

\section{Imbalanced theories}
\label{imbalancedsection}

\noindent In this section, we present ideals on $\gw$ such that the ultrafilter theory on the quotient Boolean algebra is not balanced. The first is a standard example of a non $F_{\gs}$-ideal, the second is an $F_\sigma$-ideal, and the third is an analytic P-ideal. In all cases, we use the following criterion for the failure of balance:

\begin{theorem}
\label{imbalancetheorem}
Let $I$ be an analytic ideal on $\gw$. Suppose that there are Polish spaces $Y_0, Y_1$ and Borel functions $f_0\colon Y_0\to\power(\gw)$ and $f_1\colon Y_1\to\power(\gw)$ such that the following sets are meager:

\begin{enumerate}
\item the set $\{\langle y_0, y_1\rangle\in Y_0\times Y_1\colon f_0(y_0)\cap f_1(y_1)\notin I\}$;
\item for every set $z\subset\gw$, the set $\{y_0\in Y_0\colon f_0(y_0)\setminus z\in I\}$ or the set $\{y_1\in Y_1\colon f_1(y_1)\cap z\in I\}$.
\end{enumerate}

\noindent Then the ultrafilter theory of $I$ is not balanced.
\end{theorem}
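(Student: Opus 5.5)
The plan is to violate Definition~\ref{maindefinition} directly. I will choose a parameter $x$ and independent generic points $y_0 \forkindep_x y_1$ in $Y_0, Y_1$. The corresponding formulas $f_0(y_0)$ and $\lnot f_1(y_1)$ will satisfy $T \vdash f_0(y_0) \to \lnot f_1(y_1)$ but will have no interpolant in $\vd_x$. Throughout I use two facts about the ultrafilter theory $T$ of $I$ (Example~\ref{ultrafilterexample}). First, every formula is $T$-provably equivalent to an atomic one $a \subseteq \gw$, obtained by replacing Boolean connectives with set operations. Second, for sets $a, b \subseteq \gw$, $T \vdash a \to b$ holds exactly when $a \setminus b \in I$, and $T \vdash a \to \lnot b$ holds exactly when $a \cap b \in I$.

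Let $x$ be a real coding $I$ (as an analytic set), the spaces $Y_0, Y_1$ and the Borel functions $f_0, f_1$; then $T \in \vd_x$. Work in the Solovay model and let $\langle y_0, y_1 \rangle \in Y_0 \times Y_1$ be a pair that is Cohen generic over $V[x]$ for the product of the two Cohen posets on $Y_0$ and $Y_1$. From the geometric axiomatization (Section~\ref{appendixsection}), I would invoke that mutually generic points of this kind satisfy $y_0 \forkindep_x y_1$, and that every real in $\vd_x$ belongs to $V[x]$.

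Set $\phi_0 = f_0(y_0) \in \vd_{xy_0}$ and $\phi_1 = \lnot f_1(y_1) \in \vd_{xy_1}$. The set in item (1) is meager, so it is contained in a meager Borel set coded in $V[x]$. The generic pair avoids that Borel set, so $f_0(y_0) \cap f_1(y_1) \in I$; by the second fact above, $T \vdash \phi_0 \to \phi_1$.

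Suppose towards a contradiction that there is an interpolant $\psi \in \vd_x$. Replace it by the provably equivalent atomic set $z \subseteq \gw$; since $z$ is obtained from $\psi$ in an $\vd_x$ way, $z \in \vd_x$ and therefore $z \in V[x]$. The interpolation properties translate as follows:
\begin{itemize}
\item $T \vdash f_0(y_0) \to z$ means $f_0(y_0) \setminus z \in I$;
\item $T \vdash z \to \lnot f_1(y_1)$ means $f_1(y_1) \cap z \in I$.
\end{itemize}
Apply item (2) to this $z$. Either $\{y \in Y_0 \colon f_0(y) \setminus z \in I\}$ is meager, or $\{y \in Y_1 \colon f_1(y) \cap z \in I\}$ is meager. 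In either case the meager set is analytic with code in $V[x]$, so it is covered by a meager Borel set coded in $V[x]$. The relevant generic point $y_0$ (Cohen over $V[x]$) or $y_1$ (Cohen over $V[x]$) avoids that Borel set, contradicting the corresponding bullet above. Hence no interpolant exists, and $T$ is not balanced.

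The main obstacle is the bookkeeping against the axiomatization rather than the combinatorics. I need to justify three points from the appendix facts: that product-Cohen generic points over $V[x]$ are $\forkindep_x$-independent, that $\vd_x$ reals lie in $V[x]$, and that meagerness statements about analytic sets with parameters in $V[x]$ transfer to the generic points. The last point is the covering of meager analytic sets by ground-coded meager Borel sets, together with absoluteness of membership in such Borel sets. I would first look for a fact in Section~\ref{appendixsection} stating that the $\forkindep_x$-independent pairs include mutually generic Cohen points, and otherwise derive it from the Solovay-model characterization of independence via mutual genericity.
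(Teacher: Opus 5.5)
Your proposal is correct and follows essentially the paper's own route: take a Cohen-generic pair over the parameter so that $y_0\forkindep_x y_1$, use item (1) to get $T\vdash f_0(y_0)\to\lnot f_1(y_1)$, then reduce any interpolant in $\vd_x$ to an atomic $z\in\vd_x$ and rule it out with item (2) and genericity. The differences are cosmetic: you work concretely over $V[x]$ rather than over $\vd_x$ in the axiomatization, and you apply (2) coordinatewise rather than to a meager set of pairs; your reading of the second interpolation condition as $f_1(y_1)\cap z\in I$ is the correct one, and like the paper you rely without proof on the independence of Cohen-generic pairs.
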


\noindent As a complementary observation, note that the set in (1) is coanalytic and and the sets in (2) are analytic, so their meagerness is a conjunction of analytic and co-analytic statements. Thus, the properties of $f_0, f_1$ are inherited by all well-founded models of ZF+DC which contain Borel codes for $f_0$ and $f_1$.

\begin{proof}
Suppose that $x$ is a set of ordinals such that $I, f_0, f_1$ belong to $\vd_x$, and let $\langle y_0, y_1\rangle\in Y_0\times Y_1$ be a point Cohen-generic over $\vd_x$. Then $y_0\forkindep[x]y_1$, and the sets $f_0(y_0), f_1(y_1)\subset\gw$ belong to the respective classes $\vd_{xy_0}, \vd_{xy_1}$. By the assumption (1) and the genericity, $f_0(y_0)\cap f_1(y_1)\in I$. Thus, writing $a_0=f_0(y_0)$ and $a_1=\gw\setminus f_1(y_1)$, the ultrafilter theory proves $a_0\to a_1$. It will be enough to show that the implication cannot pass through an interpolant in $\vd_x$.

Indeed, if $b\in\vd_x$ is any subset of $\gw$, the set $\{\langle y_0, y_1\rangle\in Y_0\times Y_1\colon a_0\setminus b\in I$ and $a_1\cap b\in I\}$ is in $\vd_x$, it is meager by the assumption (2), and by the genericity of the pair $\langle y_0, y_1\rangle$, it does not contain the pair $\langle y_0, y_1\rangle$. It follows from the definitions that $b$ is not an interpolant between $a_0$ and $a_1$. Since every formula in $\vd_x$ is provably equivalent to an atomic one, this completes the proof.
\end{proof}

\begin{example}
	Let $I$ be the ideal on $\gw\times\gw$ consisting of sets with all vertical sections finite. The ultrafilter theory of $I$ is not balanced.
\end{example}

\begin{proof}
	For the duration of this proof, we replace $\gw\times\gw$ with $\gw\times\bintree$, and define $Y_0=Y_1=(\cantor)^\gw$ and functions $f_0, f_1\colon Y_0, Y_1\to\power(\gw\times\bintree)$ by $f_0(y)=f_1(y)=\{\langle n, t\rangle\colon t\subset y(n)\}$. It is not difficult to verify that the functions $f_0, f_1$ satisfy the demands (1) and (2) of Theorem~\ref{imbalancetheorem}. For (1) note that if $y_0, y_1\in (\cantor)^\gw$ are sequences such that no entry occurs in both $y_0$ and $y_1$ simultaneously, then $f_0(y_0)\cap f_1(y_1)$ has all vertical sections finite. For (2), let $z\subset\gw\times\gwtree$ be any set. Either there are infinitely many $n\in\gw$ such that $z_n\subset\bintree$ is somewhere dense, in which case the set $\{y_1\in Y_1\colon f_1(y_1)\cap z\in I\}$ is meager, or there are only finitely many such $n$, in which case the set $\{y_0\in Y_0\colon f_0(y_0)\setminus z\in I\}$ is meager.
\end{proof}

\begin{example}
Let $c_n$ for $n\in\gw$ be pairwise disjoint finite sets such that $\lim_n |c_n|=\infty$. Let $C=\bigcup_nc_n$ and let $I$ be the ideal on $C$ generated by selectors on the collection $\{c_n\colon n\in\gw\}$. Then the ultrafilter theory of $I$ is not balanced.
\end{example}

\begin{proof}
Passing to a subsequence and shrinking the sets $c_n$ if necessary, we may assume that $|c_n|=(n+1)^2$. For each $n\in\gw$, present $c_n$ as a cartesian product $c_n=a_{0n}\times a_{1n}$ of two sets of cardinality $n+1$. Write $Y_0=\prod_na_{0n}$ and $Y_1=\prod_na_{1n}$. Define a function $f_0\colon Y_0\to\power(C)$ by $f_0(y_0)=\bigcup_n (\{y_0(n)\}\times a_{1n})$. Similarly, define a function $f_1\colon Y_1\to\power(B)$ by $f_1(y_1)=\bigcup_n (a_{0n}\times\{y_1(n)\})$. It will be enough to show that the functions $f_0$ and $f_1$ satisfy the assumptions of Theorem~\ref{imbalancetheorem}.

Clearly, if $y_0\in Y_0$ and $y_1\in Y_1$ are arbitrary points, then $f_0(y_0)\cap f_1(y_1)=\{\langle y_0(n), y_1(n)\rangle\colon n\in\gw\}$ and this set belongs to the ideal $I$ by the definitions, proving the assumption (1). To prove (2), suppose that $z\subseteq C$ is an arbitrary set. Either there are infinitely many numbers $n\in\gw$ such that the relative counting measure of $z\cap a_{0n}\times a_{1n}$ is greater than $1/2$. In such a case, by the Fubini theorem there are infinitely many $n$ for which there is a horizontal section of the set $z\cap a_{0n}\times a_{1n}$ with relative counting measure greater than $1/2$; this means that for comeagerly many $y_1\in Y_1$, $f_1(y_1)\cap z\notin I$. Or, for all but finitely many $n$, the relative counting measure of $z\cap a_{0n}\times a_{1n}$ is at most $1/2$. By a symmetric use of the Fubini theorem it follows that for co-meagerly many $y_0\in Y_0$, $f(y_0)\setminus z\notin I$. (2) of Theorem~\ref{imbalancetheorem} follows.
\end{proof}

\begin{example}
Let $c_n$ for $n\in\gw$ be pairwise disjoint finite sets, each of the equipped with a probability submeasure $\mu_n$ such that masses of singletons tend to $0$ in the set $C=\bigcup_nc_n$. Let $I$ be the ideal of sets $z\subset C$ such that $\lim_n\mu_n(z\cap c_n)=0$. Then the ultrafilter theory of $I$ is not balanced.
\end{example}

\begin{proof}
First note that if $a$ is a finite set equipped with a probability submeasure $\mu$ in which masses of singletons are at most $1/n$, then there is an increasing sequence $\{a_i\colon i\in n+1\}$ of subsets of $a$ such that $|\mu(a_i)-i/n|\leq 1/n$ holds for every $i\in n$: just let $a_0=0$ and recursively let $a_{i+1}$ be an inclusion-minimal superset of $a_i$ whose mass is greater or equal to $i/n$.

This means that passing to a subsequence of the sets $c_n$ if necessary, we may reindex them by elements of $\bintree$ and in each $c_t$ find an inclusion-decreasing sequence $\langle c_{it}\colon i\leq |t|\rangle$ of subsets of $c_t$ such that $1/{i+2}\leq\mu_t(c_{it})\leq 1/{i+1}$. Let $Y_0=Y_1=\cantor$ and let $f_0=f_1\colon Y_0\to \power(C)$ be the function defined by $f_0(y)\cap c_t=c_{it}$ where $i=|\{j\in n\colon y(j)\neq t(j)\}|$. It will be enough to show that the functions $f_0$ and $f_1$ satisfy the assumptions of Theorem~\ref{imbalancetheorem}.

To prove (1) of these assumptions, suppose that $y_0, y_1\in\cantor$ be sequences which differ at infinitely many entries, and argue that $f_0(y_0)\cap f_1(y_1)\in I$. To this end, let $k\in\gw$ be arbitrary. There is a number $n\in\gw$ such that $y_0\restriction n$ and $y_1\restriction n$ differ at $2k$ or more entries. Now, for each finite string $t\in\bintree$ of length at least $n$, one of the sets $\{j\in n\colon y_0(j)\neq t(j)\}$ and $\{j\in n\colon y(j)\neq t(j)\}$ must have cardinality at least $k$, so the intersection $f_0(y_0)\cap f_1(y_1)\cap c_t$ must be a subset of $c_{kt}$, and must have $\mu_t$-mass at most $1/k+1$. This shows that $\lim_t\mu_t(f_0(y_0)\cap f_1(y_1)\cap c_t)=0$ as desired.

To prove (2) of the assumptions, let $z\subset C$ be any set. Suppose that both sets $A_0=\{y_0\in Y_0\colon f_0(y_0)\setminus z\in I\}$ and $A_1=\{y_1\in Y_1\colon f_1(y_1)\cap z\in I\}$  are non-meager; as they are both analytic, there must be strings $u_0, u_1\in\bintree$ such that $A_0$ is co-meager below $u_0$ and $A_1$ is co-meager below $u_1$; we may choose them to be of equal length. Write $n=|\{i\in\dom(u_0)\colon u_0(i)\neq u_1(i)\}|$. By repeated Baire category arguments and the definition of the ideal $I$ it is possible to find a string $v\in\bintree$ and a finite set $d\subset\bintree$ such that the sets $\{y_0\in Y_0\colon\forall w\in\bintree\setminus d\ \mu_w(f_0(y_0)\setminus z)<1/3n\}$ and $\{y_1\in Y_1\colon\forall w\in\bintree\setminus d\ \mu_w(f_1(y_1)\cap z)<1/3n\}$ are comeager below $u_0^\smallfrown v$ and $u_1^\smallfrown v$ respectively, and $u_0^\smallfrown v\notin d$.

Now, write $w=u_0^\smallfrown v$. For every $y_0\in Y_0$ with $u_0^\smallfrown v\subset y_0$, $c_{0w}\subset f_0(y_0)$ holds by the definition of $f_0$. Similarly, for every $y_1\in Y_1$ with $u_1^\smallfrown v\subset y_1$, $c_{nw}\subset f_1(y_1)$. Thus, the sets $f_0(y_0)\cap c_{nw}\setminus z$ and $f_1(y_1)\cap c_{nw}\cap z$ partition the set $c_{nw}$ of $\mu_n$-mass greater than $1/n+2$, and they cannot both have mass smaller than $1/3n$. This contradicts the choice of the string $v$.
\end{proof}

\section{A preservation theorem}
\label{preservationsection}

The main purpose of isolating various classes of balanced forcings is to prove preservation theorems for them, with resulting consistency results in the ZF+DC set theory. In this section, we provide a powerful preservation theorem which shows that Boolean balanced forcings do not add colorings to many hypergraphs which in ZFC are countably chromatic.

\begin{theorem}
	\label{preservationtheorem}
	Let $\Gamma$ be an abelian Polish group, let $d\geq 2$ be a number, and let $H\subset[\Gamma]^d$ be a hypergraph of arity $d$ such that
	
	\begin{enumerate}
		\item $H$ is invariant under shifts;
		\item for every number $i\in\gw$, in every nonempty open set $O\subset\Gamma$ there is a hyperedge $e\in H$ such that for every $i\in j$ $(i+1)e\in H$ holds.
	\end{enumerate}
	
\noindent Then, in every extension by a Boolean balanced forcing, the chromatic number of $H$ is uncountable.
\end{theorem}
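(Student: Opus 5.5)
We argue in the Solovay model over a ground model containing the relevant parameters. Suppose towards a contradiction that $T$ is a balanced theory, $p\in P_T$ is a condition, and $\tau$ is a $P_T$-name such that $p\Vdash\tau\colon\Gamma\to\gw$ is an $H$-coloring. Let $x$ be a set of ordinals with $T, p, \tau, H\in\vd_x$. By Theorem~\ref{balancetheorem}(1,2) we may strengthen $p$ to a complete theory $q\in\vd_x$ in the language $\mathcal{L}_T\cap\vd_x$ consistent with $T$; then $O_q$ is balanced over $x$. The plan is to force over $\vd_x$ with the Cohen poset on $\Gamma$ and use the fact that the coloring of a Cohen generic point $\gamma$ is decided by some condition $r\leq q$ in $\vd_{x\gamma}$.

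So let $\gamma\in\Gamma$ be Cohen generic over $\vd_x$. There are a condition $r_0\leq q$ in $\vd_{x\gamma}$ and a number $m$ with $r_0\Vdash\tau(\gamma)=m$. Since $r_0$ is well-orderable and in $\vd_{x\gamma}$, it is a set of formulas; by a forcing-theorem argument in $\vd_x[\gamma]$ there are a basic open set $O\ni\gamma$ in $\Gamma$ and a $\vd_x$-definable assignment $\delta\mapsto r_\delta$ (one for each $\delta\in O$ Cohen generic over $\vd_x$) such that $r_\delta\leq q$ and $r_\delta\Vdash\tau(\delta)=m$. Now use assumption (2) inside $O$: there is a hyperedge $e\in H$ with $e\subset O$ such that the multiples $(i+1)e$ are also hyperedges (the assumption as intended says that the dilations $e, 2e,\dots, je$ lie in $H$ for any prescribed $j$). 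By shift invariance (1), $\gamma'+e\in H$ for every $\gamma'$, and we will choose a shift so that the points of the hyperedge $\{\delta_0,\dots,\delta_{d-1}\}$ are pairwise mutually Cohen generic over $\vd_x$ in a suitable sense, i.e.\ $\delta_k\forkindep[x]\delta_l$ for $k\neq l$ as elements of an abelian group, which we get by writing each $\delta_k=\gamma'+e_k$ with $\gamma'$ Cohen generic over $\vd_{x e}$ and using that translation by a generic preserves genericity.

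The key step is then amalgamation: the conditions $r_{\delta_k}$ for $k\in d$ lie in $\vd_{x\delta_k}$ and all extend $q$, so by the balance of $O_q$ (Theorem~\ref{balancetheorem}(2)) they are pairwise compatible. We need them to have a common lower bound. For this we want an iterated version of balance: if $y_0,\dots,y_{d-1}$ are mutually independent over $x$, then the $r_k\in\vd_{xy_k}$ below $q$ have a common lower bound. I would prove this by induction on $d$, using the interpolation Definition~\ref{maindefinition} with $y_0$ versus $y_1\cup\dots\cup y_{d-1}$ (independent over $x$ when the $y_k$ are mutually generic), after first extending $r_0\cup\dots\cup r_{d-2}$ to a condition in $\vd_{x y_0\dots y_{d-2}}$ which still lies below $q$. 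Once a common lower bound $s$ exists, $s\Vdash\tau(\delta_k)=m$ for all $k$, so $\tau$ is constant on the hyperedge $\{\delta_k\}$, contradicting that $\tau$ is an $H$-coloring.

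The main obstacle is ensuring genuine mutual independence of the hyperedge's vertices: the vertices of $\gamma'+e$ are all definable from $\gamma'$ and $e$, so they are not independent in general. This is exactly where the dilation hypothesis in (2) enters; I expect to need a partition/Ramsey-type argument (the ``infinitary partition theorem'' mentioned in the introduction): choose a generic $\gamma'$ and many multiples $(i+1)e$, color the indices $i$ by the conditions $r_{\gamma'+(i+1)e_k}$ modulo the interpolants in $\vd_x$, and find a homogeneous configuration in which the interpolation argument of Definition~\ref{maindefinition} applies along each pair. I would first try to reduce everything to the case $d=2$ with the pair argument above and then handle general $d$ through the multiples.
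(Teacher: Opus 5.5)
There is a genuine gap, and it is at the step you flag as the main obstacle. The vertices of a hyperedge cannot in general be made independent over $x$, so the amalgamation strategy has nothing to act on.

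\begin{itemize}
\item Already in the main application, Example~\ref{vitaliexample}, every edge has the form $\{\delta,\delta+s\}$ with $s$ rational. Hence $\vd_{x\delta}=\vd_{x(\delta+s)}$ for every $x$.
\item If the two vertices were independent over $x$, Fact~\ref{facticfact}(2) would give $\delta\in\vd_{x\delta}\cap\vd_{x(\delta+s)}=\vd_x$, contradicting genericity. Your own translation trick shows the same thing: all vertices of $\gamma'+e$ generate the single class $\vd_{xe\gamma'}$.
\item Balance of $O_q$ (Theorem~\ref{balancetheorem}(2)) only amalgamates conditions living in independent extensions. It says nothing about $r_\delta, r_{\delta+s}$ lying in the same class $\vd_{x\delta}$, and nothing makes those compatible. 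For instance, if $r_\delta$ is the canonically first complete theory in $\mathcal{L}_T\cap\vd_{x\delta}$ below $q$ computed from the parameter $\delta$, then $r_\delta$ and $r_{\delta+s}$ are complete theories in the same language. They are therefore compatible only if they are equal, and nothing forces that.
\item The Ramsey patch does not help. All the conditions $r_{\gamma'+(i+1)e_k}$ still sit in one class, where neither interpolation nor balance applies.
\item Your inductive ``iterated balance'' is fine given mutual independence, but that hypothesis is never available here.
\end{itemize}

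What is needed is not to amalgamate several conditions. You need a \emph{single} condition, balanced over a class containing a whole hyperedge, whose definition does not depend on which vertex is being colored.

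The paper gets such a condition by a tail-invariance trick.

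\begin{itemize}
\item The dilation hypothesis enters only through the Hales--Jewett theorem. It is used to build finite sets $a_m\subset\Gamma$ near $0$ on which $H$ has chromatic number $\geq m$. Points are then coded as sums $\Sigma y$ with $y\in\prod_m a_m$.
\item Take $(y,U)$ generic over $\hvd_x$ for $Q\times R$. Here $Q$ consists of coordinatewise subsets of the $a_m$ with unbounded chromatic numbers, and $R=\power(\gw)/\mathrm{fin}$. Since $Q$ adds no independent reals, $U$ measures every subset of $\gw$ in $\vd_{xyU}$ (Claim~\ref{qrclaim}).
\item Define $p(y,U)$ as the $U$-limit over $n$ of the canonically first complete theories in $\mathcal{L}_T\cap\vd_{xyU}$ extending $p$, computed from the tails $y\restriction(\gw\setminus n)$. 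This is again a complete theory consistent with $T$.
\item By Theorem~\ref{balancetheorem}, $p(y,U)$ decides $\tau(\Sigma y)=k$. By Fact~\ref{factgen}(2), this holds uniformly for all generic pairs through some $\langle q,r\rangle$.
\item Pick $m$ with a hyperedge $e\subseteq q(m)$, and alter $y$ only at coordinate $m$ across $e$. This keeps the pair generic through $\langle q,r\rangle$ and changes neither $\vd_{xyU}$ nor $p(y,U)$. Meanwhile $\Sigma y$ moves along a shift of $e$, which is a hyperedge by shift invariance.
\end{itemize}

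So one condition forces a hyperedge to be monochromatic. This ultrafilter-limit condition, invariant under finite changes of $y$, is the idea your proposal is missing.
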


\noindent Here, if $k>0$ is a natural number and $e\in H$ is a hyperedge, $ke$ is the set $\{k\gamma\colon \gamma\in e\}$. Note that if the group has torsion elements, this set could have cardinality smaller than $e$. Thus, the second item of the assumption of the theorem is a limited version of invariance of $H$ under re-scaling.

\begin{proof}
The argument starts with a long preamble which deals only with the group and the hypergraph. Fix the group $\Gamma$ and use additive notation for it. Since $\Gamma$ is abelian, any compatible invariant metric on it is complete. Let $d$ be such a metric on $\Gamma$. 
	
	\begin{claim}
		For every number $m$ there is a nonempty finite set $a_m\subset\Gamma$ such that
		
		\begin{enumerate}
			\item $a_m$ consists of points whose distance from $0$ is smaller than $2^{-m}$;
			\item the chromatic number of $H\cap [a_m]^n$ is at least $m$.
		\end{enumerate}
	\end{claim}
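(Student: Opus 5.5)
The plan is to obtain $a_m$ as the image of a finite box in $\mathbb{Z}^d$ under a homomorphism determined by a single, very short hyperedge. The combinatorial input is Gallai's theorem, the multidimensional van der Waerden theorem, in its finite form. I read the condition $H\cap[a_m]^n$ as $H\cap[a_m]^d$, and I read assumption (2) of Theorem~\ref{preservationtheorem} as follows: for every $j$ and every nonempty open $O$ there is $e\in H$ with $e\subset O$ and $ke\in H$ for all $1\leq k\leq j$.

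Fix $m$ and let $\{\epsilon_1,\dots,\epsilon_d\}$ be the standard basis of $\mathbb{Z}^d$. By the finite form of Gallai's theorem (a compactness consequence of the infinite form) there is $W\in\gw$ with the following property. For every coloring of the box $B=\{0,\dots,W\}^d$ with $m-1$ colors there are $v\in\mathbb{Z}^d$ and $k\geq 1$ such that the set $\{v+k\epsilon_l\colon l\leq d\}$ lies in $B$ and is monochromatic. Any such copy lies in $B$, so automatically $k\leq W$.

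Next I choose the hyperedge. Let $\delta=2^{-m}/(dW+1)$ and apply assumption (2) with $j=W$ and $O$ the open ball of radius $\delta$ around $0$. This gives $e=\{g_1,\dots,g_d\}\subset O$ with $ke\in H$ for all $1\leq k\leq W$. In particular each $ke$ has exactly $d$ elements, so $kg_1,\dots,kg_d$ are pairwise distinct. Define $\pi\colon\mathbb{Z}^d\to\Gamma$ by $\pi(n)=\sum_l n_lg_l$ and put $a_m=\pi''B$. Since the metric is invariant, $d(0,\pi(n))\leq\sum_l n_l\,d(0,g_l)<dW\delta<2^{-m}$ for $n\in B$, which gives item (1).

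For item (2), let $c\colon a_m\to m-1$ be any coloring and pull it back to $c\circ\pi$ on $B$. Gallai's property yields $v$ and $k\leq W$ with $\{v+k\epsilon_l\colon l\leq d\}$ monochromatic for $c\circ\pi$. The image of this set under $\pi$ is $\pi(v)+ke$. This is a hyperedge: $ke\in H$ because $k\leq W$, and $H$ is invariant under shifts. It has $d$ distinct elements, it is contained in $a_m$, and it is monochromatic for $c$. Hence no coloring of $a_m$ with $m-1$ colors is proper for $H\cap[a_m]^d$, so the chromatic number is at least $m$.

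I expect the only delicate point to be the order of quantifiers. The bound $W$ must be fixed from Gallai's theorem before the hyperedge is chosen, because both the scaling range $j=W$ and the radius $\delta$ depend on $W$. A second point to check is that torsion cannot collapse the configuration. This is handled by using $ke\in H$, which forces $|ke|=d$, rather than invoking any injectivity of $\pi$; the argument never needs $\pi$ to be injective on $B$.
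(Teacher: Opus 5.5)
Your proof is correct and is essentially the paper's argument. The paper applies Hales--Jewett directly to words in $d^j$ and takes $a_m=\{\sum_{i<j}\delta_i\colon \delta_i\in e\}$, which is the image under your $\pi$ of the slice $\{n\in\mathbb{N}^d\colon \sum_l n_l=j\}$, and a homogeneous variable word with $k\leq j$ variables gives the monochromatic hyperedge $\beta+ke$; this is exactly the standard derivation of the Gallai--Witt theorem you cite as a black box, with your version being more explicit about the distance bound and torsion and the paper's $a_m$ being smaller.
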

	
	\begin{proof}
		First, use the Hales--Jewett theorem \cite{hales:hales} to find a number $j\in\gw$ such that for every partition of $d^j$ into $m$ many classes there is a homogeneous variable word of length $j$. Use the assumption on the hypergraph $H$ to find a hyperedge $e=\{\gamma_l\colon l\in d\}$ consisting of points of distance $<2^{-mj}$ from the neutral element such that for all $i\in j$, $(i+1)e\in H$. Consider the set $a_m=\{\gS_i\delta_i\colon \forall i\in j\ \exists l\in d\delta_i=\gamma_l\}$. We claim that the set $a_m$ works as desired.
		
		To prove this, suppose that $c\colon a_m\to m$ is a function and work to find a $c$-monochromatic hyperedge in $H$. Let $\pi\colon n^j\to m$ be the map defined by $\pi(y)=c(\gS_{i\in j}\gamma_{y(i)})$. Let $u$ be a variable word homogeneous for $\pi$. Let $\gb=\gS_{i\in j}\gamma_{u(i)}$ where the sum is taken over the non-variable entries of the word $u$, and consider $ie$ where $i$ is the number of variables in the word $u$. 
		
		By the choice of the hyperedge $e$, $ie\in H$ holds. By the invariance of $H$ under shifts, $\gb+ie\in H$ holds. And finally, by the homogenity of the variable word $u$, the hyperedge $\gb+ie\subset a_m$ is $c$-monochromatic.
	\end{proof}

\noindent Choose sets $a_m\subset\Gamma$ as in the claim and write $Y=\prod_ma_m$. For every element $y\in Y$, write $\gS y\in \Gamma$ for the sum of all entries on $y$. This sum exists as the entries on the sequence $y$ tend to zero quickly. Let $Q$ be the poset of all functions $q$ whose domain is $\gw$ such that for every $m\in\gw$ $q(m)\subseteq a_m$ is a nonempty set, and the limsup of chromatic numbers of $H\cap [q(m)]^n$ is infinite. The ordering is that of coordinatewise inclusion. The poset adds an element of the product $Y$. Let $R$ be the poset of all infinite subsets of $\gw$ ordered by modulo finite inclusion. The poset adds a nonprincipal ultrafilter on $\gw$. The following claim is central.
		
		\begin{claim}
\label{qrclaim}
		(ZFC) The poset $Q\times R$ forces the filter added by the $R$-coordinate to be an ultrafilter.
		\end{claim}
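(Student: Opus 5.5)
The plan is to reduce Claim~\ref{qrclaim} to a density argument in the ground model, using that $R$ is $\gs$-closed and that $Q$ admits fusion arguments controlled by chromatic numbers. It suffices to show the following. Let $\langle q, r\rangle\in Q\times R$ be a condition and $\dot z$ a $Q\times R$-name for a subset of $\gw$. Then there are $q'\leq q$ and $r'\leq r$ such that $\langle q', r'\rangle$ forces either $r'\subseteq^*\dot z$ or $r'\cap\dot z=^*0$. Since $R$ is $\gs$-closed it adds no reals, so we may first pass to $V[G_R]$. There the generic filter $U$ is an ultrafilter on $\power(\gw)\cap V$, and $Q$ is unchanged because its conditions are coded by reals. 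So it is enough to show, in ZFC, that $Q$ forces every ultrafilter $U$ of the ground model to generate an ultrafilter. (Strictly speaking, the version in the ground model is what gets used: we want $r'\leq r$ and $q'\leq q$ as above. I will phrase the fusion so that it produces both simultaneously.)

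The combinatorial engine is the subadditivity $\chi(H\cap[b_0\cup b_1]^d)\leq\chi(H\cap[b_0]^d)+\chi(H\cap[b_1]^d)$. Consequently, whenever a coordinate $q(m)$ is split into $2^\ell$ pieces, one piece retains chromatic number at least $2^{-\ell}\chi(H\cap[q(m)]^d)$. In addition, any finitely many coordinates may be frozen to singletons without leaving $Q$, because only the limsup of the chromatic numbers matters. Using these two facts I would carry out a fusion. We construct conditions $q_k$, finite sets $F_k\subset\gw$ of ``active'' coordinates with $\chi(H\cap[q_k(m)]^d)\geq k$ for some $m\in F_k\setminus F_{k-1}$, infinite sets $r_k\subseteq r$ decreasing modulo finite, and numbers $n_k\in r_k$. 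We require that $q_{k+1}$ agrees with $q_k$ on $F_k$ up to shrinking by a bounded factor. We also require that for every $n\in r_{k+1}$, $q_{k+1}$ decides ``$n\in\dot z$'' as a function $d_n$ of the values of the generic on coordinates in $F_k$ only. Coordinates outside $F_k$ that are not yet active get frozen to points or pushed further out. There are only finitely many possible functions $d_n$ on the finite product $\prod_{m\in F_k}q_k(m)$. So by pigeonhole we may shrink $r_{k+1}$ to an infinite set on which $d_n$ is constant, say equal to $D_k$, and choose $n_k\in r_{k+1}$ above $n_{k-1}$. The limit $q'$ of the fusion is in $Q$ because the active coordinates keep unbounded chromatic numbers. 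We also let $r'=\{n_k\colon k\in\gw\}$.

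At this point $q'$ forces that $n_k\in\dot z$ iff $D_k(\dot y\restriction F_k)=1$. The remaining task, and the step I expect to be the main obstacle, is to shrink $q'$ and $r'$ further so that this value is eventually constant along an infinite subset of $r'$. I would try to do this by a second fusion on the sequence of functions $D_k$. At stage $k$, split each active coordinate $m\in F_k\setminus F_{k-1}$ according to the values of the $D_j$ for $j\geq k$, which depend on it. Then use the halving bound to keep a piece of large chromatic number, passing to a sub-sequence of the $n_k$ whenever the dependence cannot be trimmed; this is analogous to the pure-decision property of creature forcings with norm $\log\chi$. If halving alone does not kill the dependence on the coordinates in $F_k$, my fallback is to exploit the specific Hales--Jewett structure of the sets $a_m$ from the preceding claim, to find monochromatic sub-configurations of large chromatic number in each coordinate. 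Both $Q$ and $R$ are then thinned once more, yielding $\langle q'', r''\rangle$ that decides $\dot z$ on a tail of $r''$ as required.
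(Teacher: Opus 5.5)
\textbf{There is a genuine gap.} It sits in the step you treat as routine, not in the one you flag as the main obstacle.

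In your first fusion you require that for \emph{every} $n$ in the infinite set $r_{k+1}$, the condition $q_{k+1}$ decides ``$n\in\dot z$'' as a function of the generic restricted to the finite set $F_k$. You give no argument for this, and the halving bound cannot supply one. Subadditivity of $\chi$ lets you remove the dependence of finitely many statements on a given coordinate at bounded cost. Here, though, infinitely many statements, each possibly depending jointly on unboundedly many coordinates outside $F_k$, must all become independent of those coordinates. At the same time infinitely many coordinates must keep large chromatic number. That is a partition theorem, not a consequence of subadditivity.

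Worse, this requirement already contains the whole claim. Suppose it holds and you have pigeonholed to the constant function $D_k$. Pick any $s\in\prod_{m\in F_k}q_{k+1}(m)$ and replace the coordinates of $q_{k+1}$ in $F_k$ by the singletons $\{s(m)\}$. The result is still a condition in $Q$, since only the limsup matters. Paired with $r_{k+1}$, it forces $r_{k+1}\subseteq\dot z$ or $r_{k+1}\cap\dot z=0$, according to the value $D_k(s)$. So your ``remaining task'' is trivial once stage one is done, and stage one is the entire difficulty. Your last paragraph (a second fusion, or a fallback to the Hales--Jewett structure of the $a_m$) describes hopes rather than an argument.

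What is actually needed is a Ramsey-theoretic fact about $Q$: it adds no independent (splitting) reals, in the following local form. For every $Q$-name $\dot z$ for a subset of $\gw$, every $q\in Q$ and every infinite $r\subseteq\gw$, there are $q'\leq q$ and an infinite $r'\subseteq r$ with $q'\Vdash r'\subseteq^*\dot z$ or $q'\Vdash r'\cap\dot z=^*0$. The paper does not reprove this; it imports it from \cite{z:ramsey}. After that the claim is a short density argument in the ground model: a fusion turns the $Q\times R$-name into a $Q$-name, and the no-independent-reals property directly yields $\langle q',r'\rangle$ as required.

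Separately, your intermediate reduction to ``$Q$ forces every ground-model ultrafilter to generate an ultrafilter'' is far stronger than needed, and you never establish it. The density argument uses only the genericity of $U$ together with the local property above.
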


\begin{proof}
The key point is that the poset $Q$ does not add independent reals by \cite{z:ramsey}. Thus, let $\langle q, r\rangle$ be a condition in the product and $\gs$ is a name for a subset of $\gw$. A standard fusion argument shows that strengthening $q$ and $r$ if necessary we may arrange for $\gs$ to be a $Q$-name. As $Q$ does not add independent reals, there are conditions $q'\leq q$ and $r'\leq r$ such that either $q'\Vdash\gs\cap r'=0$ or $q'\Vdash r'\subset\gs$ holds. So, the condition $\langle q' r'\rangle\leq \langle q, r\rangle$ forces that the generic filter contains a set which is either disjoint from or contained in $\gs$ as desired.
\end{proof}

Finally, we are ready to analyze the Boolean balanced forcing. Let $T$ be a balanced theory in a language $\mathcal{L}$, let $P_T$ be its associated poset, let $p\in P_T$ be a condition, let $\tau$ be a $P_T$-name such that $p\Vdash\tau\colon\Gamma\to\gw$ is a function. Suppose that $x$ is a set of ordinals such that $\vd_x$ contains We need to find a hyperedge $e\in H$ and a condition stronger than $p$ forcing $\tau\restriction\check e$ to be constant.

Let $y\in Y$ be a point and $U$ be a $\vd_x$-ultrafilter on $\gw$ which are $Q\times R$-generic over $\hvd_x$. By Fact~\ref{factgen}(1), $\hvd_{xyU}=\hvd_x[y, U]$ holds, so Claim~\ref{qrclaim} shows that the filter $U$ measures every set ordinally definable from $x, y$, and $U$.
A routine use of the canonical well-ordering of $\vd_{xyU}$ shows that there is a theory in the language $\mathcal{L}\cap\vd_{xyU}$ which is complete in this language, extends $p$, and it is consistent with $T$. Let $F$ be a $\vd$-function such that $F(x, y, U, n)$ is equal to the first such a theory in the canonical definability order derived from $x, y\restriction (\gw\setminus n), U$. Consider $p(y, U)=\{\phi\colon\{n\in\gw\colon \phi\in F(x, y, U, n)\}\in U\}$. Observe that $p(y)\in\vd_{xyU}$, it is a complete theory in the language $\mathcal{L}\cap\vd_{xyU}$, it is consistent with $T$, and it extends $p$. As such, the set of all conditions stronger than $p(y)$ is balanced over $xyU$ by Theorem~\ref{balancetheorem}.

By Fact~\ref{factbal}, there exists a number $k\in\gw$ such that $p(y)\Vdash\tau(\Sigma\check y)=\check k$. By Fact~\ref{factgen}(2), there must be a condition $\langle q, r\rangle\in Q\times R$ in the filter associated with $y, U$ such that any pair $y' ,U'$ which is $Q\times R$-generic over $x$ and meets the condition $\langle q, r\rangle$ satisfies $p(y', U')\Vdash\tau(\Sigma\check y')=\check k$. In the condition $q$, there must be a coordinate $m\in\gamma$ such that the set $q(m)$ contains an $H$-hyperedge $e$. Let $y_i\in Y$ for $i\in d$ be elements equal to $y$ at all coordinates except for $m$, and $e=\{y_i(m)\colon i\in d\}$.  We have the following:

\begin{itemize}
\item each pair $y_i, U$ is $Q\times R$-generic over $\hvd_{x}$ meeting the condition $\langle q, r\rangle$;
\item $\vd_{xyU}=\vd_{xy_iU}$ as the sequences $y$ and $y_i$ differ in only one entry;
\item $p(y_i, U)=p(y, U)$, as the $U$-integration used in the definition of $p(y, U)$ disregards finite differences.
\end{itemize}

\noindent Thus, the points $\gS y_i$ for $i\in d$ form an $H$-hyperedge and the condition $p(y)$ forces all points of this hyperedge to have $\tau$-color equal to $k$. This completes the proof.
\end{proof}

\noindent There are many interesting examples of hypergraphs satisfying the assumptions of Theorem~\ref{preservationtheorem}.

\begin{example}
	\label{vitaliexample}
	Let $\Gamma=\mathbb{R}$ with addition, and $H$ be the graph connecting real numbers with rational distance. Theorem~\ref{preservationtheorem} then shows that in Boolean balanced extensions, $H$ has uncountable chromatic number, which is equivalent to the nonexistence of a Vitali set.
\end{example}

\begin{example}
	Let $\Gamma$ be the countable support product of the cyclic groups $\Gamma_m$ of order $m$ for $m\geq 1$. Let $H$ be the graph connecting two elements of $\Gamma$ if they differ in exactly one entry. This is a diagonal version of the Hamming graph. Theorem~\ref{preservationtheorem} then shows that in Boolean balanced extensions, this graph has uncountable chromatic number.
\end{example}

\begin{example}
	Nonempty hypergraphs on Euclidean spaces invariant under similarities, such as the hypergraph of equilateral triangles on $\mathbb{R}^2$ or of squares on $\mathbb{R}^2$ satisfy the assumptions of Theorem~\ref{preservationtheorem}. Thus, the theorem implies that they all have uncountable chromatic number in Boolean balanced extensions.
\end{example}

\noindent To prove Theorem~\ref{maintheorem}, start in a model of ZFC+there is an inaccessible cardinal $\kappa$. Pass into a $\coll(\gw, <\kappa)$ extension and form the Solovay model as in \cite{solovay:model}. Work under the geometric axiomatization of the Solovay model as in \cite{z:reloadedA}. Consider the set $A$ of all consistent balanced theories which use a locally compact Polish space as their set of propositional variables and are $K_\gs$. Consider the theory $T$ which is the disjoint union of theories in $A$ and force with its associated poset $P_T$. Note the following items:

\begin{itemize}
\item the theory $T$ is balanced by Theorem~\ref{producttheorem};
\item the poset $P_T$ is Boolean balanced, therefore does not add Vitali set by Theorem~\ref{preservationtheorem} and Example~\ref{vitaliexample};
\item every consistent $K_\gs$ theory on a locally compact Polish space is a subset of some theory in the set $A$ by Theorem~\ref{completiontheorem}, so in the extension it has a consistent completion;
\item the poset $P_T$ is $\gs$-closed, therefore preserves DC, and does not add any new $K_\gs$ theories.
\end{itemize}

\noindent To summarize, after the forcing with $P_T$ over the Solovay model, we obtain a model for Theorem~\ref{maintheorem}.

\section{Appendix: axiomatizing the Solovay model}
	\label{appendixsection}
	
The paper \cite{z:reloadedA} contains an axiomatization of the Solovay model relevant for this paper. To start, for every set $z$ of ordinals, the symbol $\vd_z$ stands for the class of all sets definable from $z$ and a finite tuple of ordinals. The symbol $\hvd_z$ stands for the transitive part of $\vd_z$.
	
	\begin{definition}
		The \emph{geometric axiomatization of the Solovay model} consists of the following statements: ZF+DC; every set is definable from a set of ordinals; there is no $\gw_1$-sequence of distinct reals; every set off reals has the property of Baire; and (the geometric axiom) there is a class ternary relation $\forkindep$ on sets of ordinals such that
		
		\begin{enumerate}
			\item (nontriviality) $x \forkindep_x x$ for every set $x$ of ordinals;
			\item (symmetry) $y_0\forkindep_xy_1$ implies $y_1\forkindep_xy_0$;
			\item (heredity) $y_0\forkindep_xy_1$ and $z\in\vd_{xy_0}$ implies $z\forkindep_x y_1$;
			\item (transitivity) $y_0\forkindep_xy_1$ and $z\forkindep_{xy_0}y_1$ implies $y_0z\forkindep_xy_1$;
			\item (extension) for every nonempty $\vd_x$ set $A$ of sets of ordinals, there are $y_0, y_1\in A$ such that $y_0\forkindep_xy_1$;
			\item (product) $y_0\forkindep_x y_1$ and $A\in\vd_x$ and $\langle y_0, y_1\rangle\in A$ implies that there are $\vd_x$ sets $B_0, B_1$ such that $y_0\in B_0$, $y_1\in B_1$, and for every $y'_0\in B_0$ and $y'_1\in B_1$, $y'_0\forkindep_xy'_1$ implies $\langle y'_0, y'_1\rangle\in A$.
		\end{enumerate}
	\end{definition}
	
	\noindent The extension property immediately implies its apparently stronger conclusion: for every set $y_0$ of ordinals there is $y_1\in A$ such that $y_0\forkindep_xy_1$. The rather unwieldy product property says that the ordinal theory of an independent pair of sets of ordinals depends continuously on the theory of the coordinates of the pair.

	\noindent The current paper takes place under this axiomatization. Two consequences for the class of well-orderable sets will be used repeatedly.
	
	\begin{fact}
		\label{facticfact}
		\begin{enumerate}
			\item Let $A$ be a set and $z$ a set of ordinals such that $A\in\vd_z$. Then $A$ is well-orderable iff $A\subset\vd_z$.
			\item if $y_0\forkindep_z y_1$ then $\vd_{zy_0}\cap\vd_{zy_1}=\vd_z$.
			\item if $y_0\forkindep_z y_1$ and $A_0\in \vd_{zy_0}$ and $A_1\in\vd_{zy_1}$ are disjoint subsets of $\vd_z$ then there is $B\in\vd_z$ such that $A_0\subseteq B$ and $B\cap A_1=0$.	\end{enumerate}
	\end{fact}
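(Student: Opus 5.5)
We prove the three items of Fact~\ref{facticfact} directly from the geometric axiomatization, in the order (1), (2), (3). Each item uses a different feature of the axiomatization: (1) uses only the definition of $\vd_z$ and its canonical well-ordering, (2) uses the extension and product axioms together with nontriviality/heredity, and (3) uses the product axiom in a more involved way.

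For (1), if $A\subset\vd_z$ then the canonical well-ordering of $\vd_z$, which is definable from $z$, well-orders $A$. Conversely, suppose $A\in\vd_z$ is well-orderable. Every set is definable from a set of ordinals, so there is a set $w$ of ordinals and an injection $f\colon A\to\mathrm{Ord}$ with $f\in\vd_{zw}$. Then every $a\in A$ is in $\vd_{zw}$. Use the extension property to find $w'$ with $w\forkindep_z w'$ and with $\vd_{zw'}$ containing an analogous injection $f'$. Each $a\in A$ is then in $\vd_{zw}\cap\vd_{zw'}$. So the key content of (1) reduces to (2).

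For (2), the inclusion $\vd_z\subseteq\vd_{zy_0}\cap\vd_{zy_1}$ is trivial. For the converse, let $a\in\vd_{zy_0}\cap\vd_{zy_1}$; we may assume $a$ is a set of ordinals, since otherwise we code it by one. Write $a=F_0(z,y_0)=F_1(z,y_1)$ for definitions $F_0,F_1$ with ordinal parameters. Consider the $\vd_z$ set
\[
A=\{\langle u_0,u_1\rangle\colon F_0(z,u_0)=F_1(z,u_1)\}.
\]
By the product property there are $\vd_z$ sets $B_0\ni y_0$ and $B_1\ni y_1$ such that any independent pair from $B_0\times B_1$ lies in $A$. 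By the extension property, for every $u_0\in B_0$ there is $u_1\in B_1$ with $u_0\forkindep_z u_1$ and $u_1\forkindep_z y_1$. Comparing through $y_1$ (and symmetrically through $y_0$) then shows that $F_0(z,u_0)=a$ for all $u_0\in B_0$. This requires some care with transitivity to ensure that the comparisons are with independent pairs. Once this is established, $a$ is definable from $z$ as the common value of $F_0(z,\cdot)$ on $B_0$, so $a\in\vd_z$. I expect this to be the main obstacle: arranging that enough pairs are independent so that the value is forced to be constant on $B_0$.

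For (3), let $A_0=G_0(z,y_0)$ and $A_1=G_1(z,y_1)$. Set
\[
B=\bigcup\{G_0(z,u_0)\cap\vd_z\colon u_0\in B_0\},
\]
where $B_0$ comes from applying the product property to the $\vd_z$ relation ``$G_0(z,u_0)\cap G_1(z,u_1)=0$''. Then $B\in\vd_z$, and $A_0\subseteq B$ because $y_0\in B_0$. To see that $B\cap A_1=0$, suppose $b\in G_0(z,u_0)\cap A_1$ for some $u_0\in B_0$. Since $b\in\vd_z$, we may shrink $B_0$ in $\vd_z$ so that membership of $b$ is witnessed by some $u_0$ independent from $y_1$. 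Such a $u_0$ exists by the extension property applied to the $\vd_z$ set $\{u\in B_0\colon b\in G_0(z,u)\}$. The independent pair $\langle u_0,y_1\rangle$ then lies in $B_0\times B_1$, so $G_0(z,u_0)\cap A_1=0$, a contradiction.
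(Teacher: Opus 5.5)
The paper gives no proof of this Fact; it is imported from \cite{z:reloadedA}, so there is no in-paper argument to compare against. Judged on its own, your derivation from the listed axioms has the right architecture, and only the key step of (2) needs repair.

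Part (3) is correct as written. The observation that $b\in\vd_z$ makes $\{u\in B_0\colon b\in G_0(z,u)\}$ a nonempty $\vd_z$ set is exactly right. So is applying the strong form of extension to it against $y_1$, followed by symmetry and the product property. Intersecting with $\vd_z$ in the definition of $B$ is essential, and you did it.

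Part (1) correctly reduces to (2). The sets you feed to extension and product must be genuine $\vd_z$ sets. For instance, $W=\{w'\subseteq\theta\colon A\subseteq\vd_{zw'}\}$ for a large enough ordinal $\theta$ is a nonempty $\vd_z$ set by Separation. Plain extension then already gives $w_0\forkindep_z w_1$ in $W$, whence $A\subseteq\vd_{zw_0}\cap\vd_{zw_1}$. The relations you use in (2) and (3) need the same bounding.

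The step you flag as the main obstacle in (2) is wrong as written. The condition ``$u_0\forkindep_z u_1$ and $u_1\forkindep_z y_1$'' relates two elements of $B_1$ to each other. The product property says nothing about such pairs, so no comparison with $a$ follows. What you need is one element on one side independent from two elements on the other side.

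Given $u_0\in B_0$, apply the strong form of extension to the set of ordinals $u_0y_0$ coding the pair and the $\vd_z$ set $B_1$. This gives $u_1\in B_1$ with $u_0y_0\forkindep_z u_1$. Heredity, not transitivity, then yields $u_0\forkindep_z u_1$ and $y_0\forkindep_z u_1$. Hence $F_0(z,u_0)=F_1(z,u_1)=F_0(z,y_0)=a$, so $a$ is the unique common value of $F_0(z,\cdot)$ on $B_0$ and lies in $\vd_z$.

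Also drop the remark that you may assume $a$ is a set of ordinals ``by coding''. A code for $a$ lying in $\vd_{zy_0}$ need not lie in $\vd_{zy_1}$, so that reduction is unjustified. It is also unnecessary: the argument above works verbatim for arbitrary $a$. It has to, because in (1) you apply (2) to arbitrary elements of a well-orderable set.
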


	\noindent There is also an internal forcing theorem. For a countable poset $P$ and a set $x$ of ordinals such that $P\in\vd_x$, say that a filter $g\subset P$ is generic over $x$ if it meets all dense subsets of $P$ which belong to $\vd_x$. Note that $P\subset\vd_x$ holds by Fact~\ref{facticfact}(2), so there is a $\vd_x$-isomorphism between the ordering $P$ and a relation on ordinals, and it makes sense to speak about the models $\hvd_x[g]$ and $\hvd_{xg}$. The following fact governs the behavior of these classes.

\begin{fact}
\label{factgen}
Let $P$ be a countable poset, $x$ a set of ordinals such that $P\in\vd_x$, and $g\subset P$ be a filter generic over $x$. Then

\begin{enumerate}
\item $\hvd_{xg}=\hvd_x[g]$;
\item whenever $\phi$ is a formula with one free variable and parameters $\bar v$ in $\vd_x$ and $\phi(\bar v, g)$ holds, then there is a condition $p\in g$ such that $\phi(\bar v, h)$ holds for every filter $h\subset P$ generic over $x$ containing the condition $g$.
\end{enumerate}
\end{fact}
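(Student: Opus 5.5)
We prove the two clauses of Fact~\ref{factgen} separately, each time reducing to the geometric axiomatization together with the fact that $P$ is a countable poset in $\vd_x$. Since $P\subset\vd_x$ by Fact~\ref{facticfact}(1), we fix an $\vd_x$-isomorphism of $P$ with an ordering on $\gw$ and treat $P$ as an element of $\hvd_x$. Every dense subset of $P$ lying in $\hvd_x$ lies in $\vd_x$, so $g$ is $P$-generic over $\hvd_x$ in the usual sense. Hence $\hvd_x[g]$ is a model of ZFC and an honest forcing extension of $\hvd_x$.

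For (1), one inclusion is routine. The model $\hvd_x[g]$ is the union of the sets $\tau/g$ for $\tau\in\hvd_x$, and each such $\tau/g$ is definable from $x$, $g$ and ordinals. Its transitive closure is also of this form, so $\hvd_x[g]\subseteq\hvd_{xg}$. For the reverse inclusion it suffices to show that every set of ordinals $a\in\vd_{xg}$ belongs to $\hvd_x[g]$. Write $a=\{\ga\colon\theta(\ga,x,g,\bar\gb)\}$.

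The plan is a homogeneity argument. I would show that the truth of $\theta(\ga,x,g,\bar\gb)$ in $V$ is decided by a condition in $g$, uniformly in a way definable in $\hvd_x$. That is, I want a relation $\Vdash^*$ in $\hvd_x$ such that $p\Vdash^*\theta(\check\ga)$ iff $\theta(\ga,x,h,\bar\gb)$ holds for every filter $h$ generic over $x$ containing $p$. Then $a=\{\ga\colon\exists p\in g\ p\Vdash^*\theta(\check\ga)\}\in\hvd_x[g]$.

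To get this decision property, suppose two conditions below which the value differs are both compatible with generics. I would use the extension and product axioms to find generics $h_0,h_1$ over $x$ with $h_0\forkindep_x h_1$ that realize the two different values. I expect a genuine argument here to come from \cite{z:reloadedA}, namely the fact that generic filters on countable posets over $x$ are independent-extendable. This is the main obstacle, and for it I would simply cite the corresponding theorem of \cite{z:reloadedA}. Namely, generics over $x$ for countable posets realize only types determined by $\hvd_x$-forcing, because of the Solovay homogeneity encoded in the axioms.

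Clause (2) then follows from the same decision property. Given that $\phi(\bar v,g)$ holds, the set $D$ of conditions $p$ such that either every generic $h\ni p$ satisfies $\phi(\bar v,h)$ or every generic $h\ni p$ satisfies $\lnot\phi(\bar v,h)$ belongs to $\vd_x$, since it is defined from $\bar v$ and $x$. By the decision property from (1), $D$ is dense. Genericity of $g$ gives some $p\in D\cap g$. Because $\phi(\bar v,g)$ holds, the second alternative fails for $p$, so the first holds, which is exactly the claim.

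The hard step is showing that satisfaction in $V$ of formulas about generic filters over $x$ is determined by conditions. I would reduce it to the product axiom by applying that axiom to the set $A$ of pairs of generics whose first coordinate satisfies $\phi$ and whose second does not.
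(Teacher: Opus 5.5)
The paper gives no proof of Fact~\ref{factgen}; it is quoted from \cite{z:reloadedA} as part of the axiomatization. Your two reductions are sound. Clause (1) follows from (2) via the $\vd_x$-definable relation $\Vdash^*$, since two transitive ZFC models with the same sets of ordinals coincide. Clause (2) follows from density of the set $D$ of deciding conditions.

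\textbf{The gap.} Density of $D$ is the entire content of the fact, and you do not establish it. Invoking ``the decision property from (1)'' is also circular, since your (1) already assumed it. The route you sketch does not work. Apply the product axiom to the set of pairs $\langle h_0,h_1\rangle$ of generics with $\phi(\bar v,h_0)$ and $\lnot\phi(\bar v,h_1)$, at an independent pair. It returns only $\vd_x$ sets $B_0\ni h_0$ and $B_1\ni h_1$. Using extension, one checks that all this says is that generics in $B_0$ satisfy $\phi$ and generics in $B_1$ do not. That is no more than $A=\{h\colon\phi(\bar v,h)\}$ already tells you. Nothing in the sketch turns a $\vd_x$ set of generic filters into a condition of $P$.

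\textbf{The missing idea.} It uses Fact~\ref{facticfact}(3) and the Baire property axiom, which you never invoke.

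First, if $h$ is generic over $x$ and $h\forkindep[x]y$, then $h$ is generic over $xy$. Suppose $h$ missed a dense set $E\in\vd_{xy}$. Fact~\ref{facticfact}(3) gives $B\in\vd_x$ with $h\subseteq B$ and $B\cap E=0$. Consider the $\vd_x$-dense set of conditions $p$ with either $p\notin B$ or every extension of $p$ in $B$. Since $h\subseteq B$, $h$ meets this set at a condition of the second kind, contradicting density of $E$.

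Second, the dense subsets of $P$ lying in $\vd_x$ form a well-orderable set of reals, hence a countable one. So the set $G_x$ of filters generic over $x$ is a Polish space with basis $[p]=\{h\in G_x\colon p\in h\}$. Every meager $M\subseteq G_x$ with $M\in\vd_x$ is then empty. To see this, code a sequence of dense open subsets of $P$ witnessing meagerness by a set $y$ of ordinals, and use extension to pick $h\in M$ with $h\forkindep[x]y$. By the first step, $h$ meets all of them, so $h\notin M$.

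Finally, $A\cap G_x$ has the Baire property, so densely many $q$ make it meager or comeager in $[q]$. By the previous step this means $[q]\cap A=0$ or $[q]\subseteq A$, which is exactly density of your $D$.

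A minor point: there need not be a $\vd_x$-isomorphism of $P$ with an ordering on $\gw$, because a countable ordinal may be uncountable in $\hvd_x$. A relation on a countable ordinal suffices, as in the paper.
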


\noindent A final remark regarding the geometric axiomatization concerns Polish spaces. If $Y$ is a Polish space and $x$ is a set of ordinals, by $Y\in\vd_x$ we mean that $\vd_x$ contains the underlying set of $X$, a countable dense subset of $Y$ together with its enumeration, and a complete compatible metric $d$ on $Y$. Note that if such is the case, then the metric $d\restriction Y\cap\vd_x$ is $\vd_x$-complete in the sense that for every Cauchy sequence in $\vd_x$ the limit belongs to $\vd_x$ again. This means that $\hvd_x$ contains a separable completely metrizable space which is isometric to a dense subset of $Y$ and the usual Mostowski and Shoenfield absoluteness theorems apply to $\vd_x$ and $Y$ \cite{z:interpretations}.
	
The paper \cite{z:reloadedB} provides a new and streamlined axiomatization of balanced forcing, which is used in this paper. The main definition is the following.
	
	\begin{definition}
		Let $\langle P, \leq\rangle$ be a poset. 
		
		\begin{enumerate}
			\item Let $x$ be a set of ordinals such that $\langle P, \leq \rangle\in \vd_x$. A set $O\subset P$ is \emph{balanced over $x$} if it is a nonempty open subset of $\mathbb{P}$, it belongs to $\vd_x$, and for all sets $y_0, y_1$ of ordinals such that $y_0\forkindep[x] y_1$ and all conditions $p_0\in\vd_{xy_0}$ and $p_1\in\vd_{xy_1}$ in $O$, the conditions $p_0$ and $p_1$ have a common lower bound.
			\item The poset $P$ is balanced if for every set $x$ of ordinals such that $P\in\vd_x$ and every condition $p\in P\cap\vd_x$ there is a set $O\subset P$ which is balanced over $x$ and consists of conditions stronger than $p$.
		\end{enumerate}
	\end{definition}

\noindent The main feature of balanced sets is stated in the following fact.

\begin{fact}
\label{factbal}
Let $P$ be a poset and $x$ be a set of ordinals such that $P\in\vd_x$ and let $O\subset P$ be a set open over $x$. Then any two nonempty open $\vd_x$  subsets of $O$ intersect.
\end{fact}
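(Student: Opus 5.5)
The plan is to combine the extension property of $\forkindep$ with the balance of $O$. Two conditions in $U_0$ and $U_1$ that are definable from independent parameters will then be compatible, and openness will put a common lower bound into both sets. Here \emph{open} means closed downward in $P$, and I read the hypothesis as $O$ being balanced over $x$.

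Let $U_0,U_1\subseteq O$ be nonempty open sets in $\vd_x$. I first turn them into $\vd_x$ sets of sets of ordinals so that the extension axiom applies. Every set is definable from a set of ordinals together with finitely many ordinals. So for each $p\in U_i$ there is a set $y$ of ordinals with $p\in\vd_{xy}$; enlarging $y$ by the finitely many ordinal parameters, we may even take $p$ definable from $x,y$ alone.

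Fix an ordinal $\theta$ in $\vd_x$ large enough that for every $p\in U_0\cup U_1$ such a witness $y$ can be chosen as a subset of $\theta$. This is possible by a reflection argument, since $U_0\cup U_1$ is a set. Then let
\[ A_i=\{y\subseteq\theta\colon \vd_{xy}\cap U_i\neq 0\}. \]
Each $A_i$ is nonempty, and since $U_i,\theta\in\vd_x$, each $A_i$ belongs to $\vd_x$.

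Next I obtain an independent pair. Apply the extension axiom to $A_0$ to get some $y_0\in A_0$. Then use the stronger form of extension noted after the axiomatization: for the given set of ordinals $y_0$ there is $y_1\in A_1$ with $y_0\forkindep_x y_1$. Choose $p_0\in U_0\cap\vd_{xy_0}$ and $p_1\in U_1\cap\vd_{xy_1}$.

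Both $p_0$ and $p_1$ lie in $O$, and $O$ is balanced over $x$. Hence they have a common lower bound $r$. Since $U_0$ and $U_1$ are open, $r\le p_0$ gives $r\in U_0$ and $r\le p_1$ gives $r\in U_1$. So $U_0\cap U_1\neq 0$.

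The only step requiring care is the bookkeeping that makes $A_i$ a genuine $\vd_x$ set, namely the bound $\theta$ and the absorption of ordinal parameters into $y$. The compatibility step itself is immediate from the definition of a balanced set.
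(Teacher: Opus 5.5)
Your argument is correct, and it is the standard proof of this fact, which the paper quotes without proof from the axiomatization of balanced forcing in \cite{z:reloadedB}: find $y_0\forkindep_x y_1$ with $U_i\cap\vd_{xy_i}\neq 0$, use the balance of $O$ to get a common lower bound of $p_0\in U_0\cap\vd_{xy_0}$ and $p_1\in U_1\cap\vd_{xy_1}$, and conclude by openness of $U_0,U_1$. The bookkeeping can be trimmed, since every ordinal belongs to $\vd_x$ (so $\theta$ only has to bound a witness for one element of each $U_i$) and $\vd_{xy}$ already allows finitely many ordinal parameters (so nothing needs to be absorbed into $y$); reflection is really needed only to see that $y\mapsto\vd_{xy}$ is uniformly definable, which is what puts $A_i$ into $\vd_x$.
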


\noindent As a simple corollary, for any statement $\phi$ of the $P$-forcing language, every condition in $O$ decides $\phi$ and all conditions in $O$ decide it in the same way; also, for any two sets $O_0, O_1$ balanced over $x$, the intersection $O_0\cap O_1$ is either empty or else dense in both $O_0$ and $O_1$
	
	Balanced posets do not add any sets of ordinals, no MAD families of subsets of $\gw$ etc. The full theory is exposed in \cite{z:geometric} and \cite{z:reloadedB}; we refrain from repeating it here.

\section{Declarations}

\subsection{Financial support}

The work in this paper was partly supported by National Science Foundation grant  DMS 2348371.

\subsection{Competing interests}

The authors have no competing interests as defined by Springer, or other interests that might be perceived to influence the results and/or discussion reported in this paper.

\bibliographystyle{plain} 
\bibliography{odkazy,zapletal}

\end{document}